\documentclass[11pt, baustms]{amsproc}
\usepackage{amssymb, amsmath, enumerate,booktabs, tikz, pgfplots,color}
\usepackage[skins,breakable]{tcolorbox}
\usepackage{graphicx,subcaption}
\usepackage{listings}
\usepackage[absolute,overlay]{textpos}
\usepackage{orcidlink}
\usepackage{url,hyperref,parskip}
\usepackage{thmtools}

\usetikzlibrary{math}
\usetikzlibrary{arrows}
\usetikzlibrary{patterns}
\usepackage{pgfplots}
\pgfplotsset{compat=1.18}
\usepackage{tikz}
\usepackage{atbegshi}
\hypersetup{citecolor=blue, linkcolor=blue, colorlinks=true}
\usepackage[marginparwidth=1.5cm,margin=2cm]{geometry}
\usepackage{cleveref}
\newtheoremstyle{scplain}
  {.5\baselineskip plus .2\baselineskip minus .2\baselineskip}
  {.5\baselineskip plus .2\baselineskip minus .2\baselineskip}
  {\itshape}
  {\parindent}
  {\scshape}
  {.}
  {5pt plus 1pt minus 1pt}
  {}

\theoremstyle{scplain}
\newtheorem{theorem}{Theorem}[section]
\newtheorem{lemma}[theorem]{Lemma}

\newtheorem{corollary}[theorem]{Corollary}

\newtheorem*{rexample}{Example}
\tcolorboxenvironment{rexample}{
    blanker,
    breakable,
    left=5mm,
    before skip=10pt,
    after skip=10pt,
    borderline west={5pt}{0pt}{gray!65},
    pad before break=2pt,
    pad after break=2pt,
    use color stack,
}

\newcommand\CA{\mathrm{CA}}
\newcommand\CE{\mathrm{CE}}

\renewcommand\le{\leqslant}
\renewcommand\ge{\geqslant}

\title[On applications of the clique-adjacency polynomial]{On applications of the clique-adjacency polynomial\\ to arbitrary finite graphs}

\author{John Bamberg \orcidlink{0000-0001-7347-8687}}
\address[Bamberg]{Department of Mathematics and Statistics, 
The University of Western Australia, 35 Stirling Highway, Perth, W. A. 6009, Australia.}

\author{Jake Rigg \orcidlink{0009-0006-4786-027X}}
\address[Rigg]{Department of Mathematics and Statistics, 
The University of Western Australia, 35 Stirling Highway, Perth, W. A. 6009, Australia.}

\date{}

\keywords{clique-adjacency polynomial, clique number, association scheme}

\subjclass[2020]{05C69, 05E30}

\begin{document}

\begin{abstract}
The clique adjacency polynomial (CAP), introduced by Soicher (2015), provides a powerful method for bounding the clique numbers of edge-regular graphs. In this paper, we extend the CAP framework to arbitrary finite graphs by expressing the relevant parameters in terms of average vertex degree and average edge-degree over potential cliques. This leads to a generalised CAP bound and an associated clique existence polynomial (CEP), which removes the dependence on an auxiliary integer variable and facilitates computation.
We compare the resulting bounds with classical spectral and linear programming bounds, including those of Delsarte, Hoffman, and Haemers. We show that the generalised CAP improves upon these bounds for several families of graphs. In particular, we identify infinite families of edge-regular graphs arising from projective geometry for which the CAP outperforms the Delsarte bound, as well as families of regular and non-regular graphs where the generalised CAP improves upon the Hoffman and Haemers bounds. We also develop techniques for bounding feasible parameter regions, enabling practical application of the method to both structured and unstructured graphs.
\end{abstract}

\vspace*{-1cm}
\maketitle

\section{Introduction}

Improving methods for bounding the clique number of a graph reduces the need for extensive computational search and enables deeper analytical insight into clique structure. In his 1973 thesis, Philippe Delsarte introduced a novel linear programming approach for bounding cliques in graphs arising from association schemes \cite{delsarte}. This result, known as the \emph{Delsarte linear programming bound}, simplifies to the \emph{Delsarte bound} when applied to a single graph within an association scheme. Shortly thereafter, Alan Hoffman extended the Delsarte bound to all regular graphs, and Willem Haemers subsequently generalised it further to arbitrary graphs.\footnote{These generalisations are referred to as the Hoffman bound and the Haemers bound, respectively.} Together, these bounds have become fundamental tools for estimating clique numbers and are widely used across graph theory and its applications.

In 2010, Leonard Soicher introduced a new bound designed for the study of edge-regular graphs \cite{soicher_more_2010}, which he termed the \emph{Clique Adjacency Polynomial (CAP)}. This bound has since played a significant role in the analysis of cliques in both edge-regular and strongly regular graphs, with further developments by Soicher \cite{soicher_edge_2015}, and Greaves \& Soicher \cite{grv}. Greaves and Soicher showed that the CAP improves upon the Delsarte bound for ``infinitely many feasible parameter tuples for strongly regular graphs" \cite{grv}.
In this paper, we investigate how the CAP may be extended beyond edge-regular graphs to provide bounds on the clique numbers of general graphs.
For a clique $S$ of a possibly non-regular and non-edge-regular graph $\Gamma$, define averages $\langle k\rangle_S:=\frac1{|S|}\sum_{u\in S}\deg_\Gamma(u)$ and $\langle\lambda\rangle_S:=\binom{|S|}{2}^{-1} \sum_{\{u,w\}\in\binom{S}{2}}|\Gamma(u)\cap\Gamma(w)|$.

\begin{theorem}
    \label{gen}   
    Let $\Gamma$ be a graph on $v$ vertices and let $s$ be a positive integer.
    Let \(\mathcal F_s(\Gamma)\) denote a set of admissible pairs \((k,\lambda)\) containing every pair
$\left(\langle k\rangle_S,\langle \lambda\rangle_S\right)$ arising from an \(s\)-clique \(S\) of \(\Gamma\). 
Let
    \begin{equation*}
    \CA_\Gamma(x,s,k,\lambda):=x(x+1)(v-s)-2xs(k-s+1)+s(s-1)(\lambda-s+2).\label{CAP3}
    \end{equation*}
If, for every \((k,\lambda)\in \mathcal F_s(\Gamma)\), there exists an integer \(x\) such that
\[
\CA_\Gamma(x,s,k,\lambda)<0,
\]
then \(\Gamma\) contains no clique of size \(s\).
We refer to this bound as the \textbf{generalised CAP bound}.
        
\end{theorem}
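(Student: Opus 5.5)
We argue by contraposition. Suppose $\Gamma$ contains an $s$-clique $S$, and set $k=\langle k\rangle_S$ and $\lambda=\langle\lambda\rangle_S$. By hypothesis $(k,\lambda)\in\mathcal F_s(\Gamma)$. It therefore suffices to show that $\CA_\Gamma(x,s,k,\lambda)\ge 0$ for every integer $x$. This gives a contradiction with the assumption that some integer $x$ makes the polynomial negative.

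The proof is a double count over the vertices outside $S$. For each $w\in V(\Gamma)\setminus S$ let $n_w=|\Gamma(w)\cap S|$; there are $v-s$ such vertices. Since $x$ is an integer, $n_w-x$ and $n_w-x-1$ are consecutive integers, so their product is nonnegative:
\[
(n_w-x)(n_w-x-1)\ge 0 .
\]
Summing over all $w\notin S$ gives
\[
\sum_{w\notin S} n_w(n_w-1)\;-\;2x\sum_{w\notin S} n_w\;+\;x(x+1)(v-s)\;\ge\;0 .
\]

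Next we evaluate the two sums in terms of the averages.

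\emph{First moment.} Each $u\in S$ has $\deg(u)-(s-1)$ neighbours outside $S$. Hence
\[
\sum_{w\notin S} n_w=\sum_{u\in S}\bigl(\deg(u)-s+1\bigr)=s(k-s+1).
\]

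\emph{Second moment.} The quantity $\sum_{w\notin S} n_w(n_w-1)$ counts ordered pairs of distinct $u,u'\in S$ together with a vertex $w\notin S$ adjacent to both. For each unordered pair $\{u,u'\}\subseteq S$, the common neighbours inside $S$ are exactly the other $s-2$ vertices of the clique. So the number of common neighbours outside $S$ is $|\Gamma(u)\cap\Gamma(u')|-(s-2)$. Summing over the $\binom s2$ pairs and doubling for order gives
\[
\sum_{w\notin S} n_w(n_w-1)=s(s-1)(\lambda-s+2).
\]

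Substituting both expressions into the summed inequality yields exactly $\CA_\Gamma(x,s,k,\lambda)\ge 0$ for every integer $x$, which completes the contraposition.

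No step is a serious obstacle; the argument is Soicher's edge-regular proof with sums replacing constants. The point requiring care is that nothing needs $k$ or $\lambda$ to be integers. The only integrality used is that of $x$ and the $n_w$, and the averages enter linearly through the exact identities above. This is why stating the hypothesis over a family $\mathcal F_s(\Gamma)$ containing the true averages suffices.
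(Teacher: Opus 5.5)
Your proof is correct and is essentially the paper's argument. The paper applies Soicher's block intersection polynomial theorem (with $t=2$, $m_i=0$) to the $\lambda_j$ values in its table; your inequality $\sum_{w\notin S}(n_w-x)(n_w-x-1)\ge 0$, combined with the first- and second-moment counts, is that theorem (together with Soicher's counting identity for $j=1,2$) written out explicitly for this case, which makes your version self-contained.
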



After outlining the effective application of the generalised CAP bound, we present several illustrative examples in which the bound from  \Cref{gen} improves upon both the Delsarte linear programming bound and the Hoffman bound. In addition, we prove (\Cref{edge_reg_1} and \Cref{edge_reg_2}) that the CAP bound surpasses the Delsarte bound for two infinite families of graphs, $\Gamma_{q^+}$ and $\Gamma_{q^-}$, arising from conics in Desarguesian projective planes. Finally, we give two infinite families of strictly edge-regular graphs for which the CAP improves upon the Delsarte bound. We also exhibit an infinite family of regular graphs where the generalised CAP improves on the Hoffman bound, as well as an infinite family of graphs for which it improves on the Haemers bound (\Cref{reg_1} and \Cref{nonreg_1}).

\section{Background}

  \subsection{Delsarte's clique bounds}

    An association scheme $\mathcal{A} = (V, \mathcal{R})$ can be represented as a colouring of $K_{|V|}$, where $i$-coloured edges correspond to vertices related by $R_i \in \mathcal{R}$. For $X \subseteq \mathcal{R}$, we define the regular graph $\Gamma_X$ as the graph on $V$ arising from the union of $X$. In his 1973 thesis \cite{delsarte}, Delsarte provides a novel way to bound the clique number of an association scheme graph $\Gamma_X$. Given an $n$-class association scheme $\mathcal{A} =(V, \mathcal{R})$, consider a non-empty subset $S \subseteq V$. Delsarte defines the \textit{inner distribution vector}  $a := (a_0,a_1,\dots, a_n)$ of $S$ where: 
    \begin{equation*} a_i = \frac{|(S \times S) \cap R_i|}{|S|},
	\end{equation*}
	 for $R_i \in \mathcal{R}$. By considering the inner distribution vector with respect to a maximal choice of $X \subseteq \mathcal{R}$, \footnote{By convention, $a_0=1$ always, regardless of whether $R_0 \in X$.} Delsarte produces the following linear program.
    \begin{align*}
	\textsc{maximise: }  & \sum_{i=0}^{n} a_i, \tag{DELSARTE}\\
	\textsc{subject to: } & (a Q)_j \ge 0 \textnormal{ for all entries } j, \\
	& a_0 = 1, \\
	& a_i \ge 0 \textnormal{ for } R_i \in X,\\
	& a_i = 0 \textnormal{ for } R_i \notin X. 
	\end{align*}
	Here, $Q$ is the second eigenmatrix of the relevant association scheme. The objective value of the linear program DELSARTE provides an upper bound on $\omega(\Gamma_X)$, which is referred to as the \textit{Delsarte LP bound}. Delsarte also observed the following:
    
	\begin{theorem}[{\cite[3.23]{delsarte}}]
	\label{DB}
	For a single graph $\Gamma_{R_i}$ of an association scheme, the Delsarte LP bound (a feasible solution to \textnormal{DELSARTE}) simplifies to:
	\begin{equation*} 
		\omega(\Gamma) \le 1 - \frac{k_i}{\gamma_{v}}, 
	\end{equation*}
	where $k_i$ and $\gamma_v$ are the valency and minimum eigenvalue of $\Gamma_{R_i}$ respectively.
	\end{theorem}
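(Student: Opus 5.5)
The plan is to show that the column constraint $(aQ)_j\ge 0$ of DELSARTE, taken for the idempotent belonging to the smallest eigenvalue, already forces the stated bound. Since the bound must hold for every clique, I will work directly with the characteristic vector of a clique rather than solving the whole linear program. Write the scheme as having adjacency matrices $A_0=I,A_1,\dots,A_n$ and primitive idempotents $E_0=\frac1vJ,E_1,\dots,E_n$. Let $P$ be the first eigenmatrix, with $A_i=\sum_j P_{ji}E_j$, and $Q$ the second, with $E_j=\frac1v\sum_i Q_{ij}A_i$. Let $m_j=\operatorname{rank}E_j$.

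The first key step is the standard duality relation $Q_{ij}/m_j=P_{ji}/k_i$. I will take this as part of the basic theory of association schemes. It gives
\[
E_j=\frac{m_j}{v}\sum_{l=0}^n \frac{P_{jl}}{k_l}A_l .
\]
Now fix $i$, and let $\gamma=\gamma_v$ be the minimum eigenvalue of $\Gamma_{R_i}$. Its eigenvalues are the entries $P_{ji}$, so I choose $j$ with $P_{ji}=\gamma$. Note that $\gamma<0$, because the trace of $A_i$ is zero and $A_i\neq 0$.

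The second step takes a clique $S$ of $\Gamma_{R_i}$ with $|S|=s$, and its characteristic vector $\chi$. Every pair of distinct vertices of $S$ lies in $R_i$, so
\[
\chi^{\top}A_0\chi=s,\qquad \chi^{\top}A_i\chi=s(s-1),\qquad \chi^{\top}A_l\chi=0 \text{ for } l\ne 0,i.
\]
These are exactly the entries $|S|a_l$ of the inner distribution, with $a=(1,s-1,0,\dots,0)$ placed in coordinates $0$ and $i$. Since $E_j$ is positive semidefinite, this gives
\[
0\le \chi^{\top}E_j\chi=\frac{m_j}{v}\left(s\cdot\frac{P_{j0}}{k_0}+s(s-1)\frac{\gamma}{k_i}\right)=\frac{m_j s}{v}\left(1+(s-1)\frac{\gamma}{k_i}\right),
\]
where we used $P_{j0}=k_0=1$. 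Equivalently, this is the constraint $(aQ)_j\ge0$.

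Finally, since $\gamma<0$, the inequality rearranges to $s-1\le -k_i/\gamma$, that is, $s\le 1-k_i/\gamma_v$. Taking $S$ to be a maximum clique gives the theorem.

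The only point I expect to need care is the identity $Q_{ij}/m_j=P_{ji}/k_i$. I would derive it by computing the trace of $A_iE_j$ in two ways. On one hand it equals $P_{ji}m_j$, from the spectral decomposition of $A_i$. On the other hand it equals $\frac1v Q_{ij}\operatorname{tr}(A_i^{\top}A_i)=Q_{ij}k_i$; this uses that $\operatorname{tr}(A_iA_l)$ is zero unless $l$ is the transpose relation of $i$. For a symmetric scheme this is immediate, and in general one uses $A_{i'}=A_i^{\top}$ together with the fact that $\overline{P_{ji}}=P_{ji'}$.
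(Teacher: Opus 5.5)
Your argument is correct. The paper gives no proof of its own here: the statement is quoted from Delsarte and described as what DELSARTE collapses to when $X=\{R_i\}$. Your proof is exactly that collapse. For $a$ supported on coordinates $0$ and $i$, the constraint $(aQ)_j\ge 0$ reads $m_j\bigl(1+a_iP_{ji}/k_i\bigr)\ge 0$, and the column with $P_{ji}=\gamma_v$ is the binding one. The only difference in route is that you derive this single constraint directly from $\chi^{\top}E_j\chi\ge 0$ for the clique, instead of invoking the LP. That makes the argument self-contained and shows that only one column of $Q$ matters.

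Two small additions would tighten it:

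\begin{enumerate}
\item If you want the literal claim that the LP bound \emph{equals} $1-k_i/\gamma_v$, add a one-line feasibility check for $a_i=-k_i/\gamma_v$. Each column $l$ then gives $m_l\bigl(1-P_{li}/\gamma_v\bigr)\ge 0$, because $P_{li}\ge\gamma_v$ and $\gamma_v<0$.
\item In a non-symmetric scheme, your trace computation actually yields $Q_{i'j}k_i=P_{ji}m_j$, i.e.\ $Q_{ij}/m_j=\overline{P_{ji}}/k_i$. So your expansion of $E_j$ should carry a conjugate. This is harmless here, because $R_i$ is symmetric, and the paper's schemes, being colourings of $K_{|V|}$, are symmetric throughout.
\end{enumerate}
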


   The displayed inequality in Theorem \ref{DB} is referred to as the \emph{Delsarte bound}. 

	\subsection{The Hoffman bound}

    When considering a regular graph $\Gamma$ which does not arise from an association scheme, the Delsarte bound does not necessarily hold. For this circumstance we could instead apply the \emph{Hoffman bound} -- a generalisation of the Delsarte bound to arbitrary regular graphs. Hoffman never formally published his bound, however an outline of its context can be found in Haemers' 2021 paper \cite{haemers_hoffmans_2021}. Traditionally, the Hoffman bound is given in terms of the independence number, however we consider it in the following form.
    
\begin{theorem}[{\cite[Theorem 1]{haemers_hoffmans_2021}}]
	\label{HBB}	
	For a $k$-regular graph $\Gamma$ on $v$ vertices:
	\begin{equation*}
		\omega(\Gamma) \le \frac{v(1+\gamma_2)}{v-k+\gamma_2}, 
	\end{equation*}
    where $\gamma_2$ is the second-largest eigenvalue of $\Gamma$.
\end{theorem}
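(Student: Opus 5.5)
We prove the Hoffman bound of \Cref{HBB} by a quadratic-form (interlacing-free) argument that uses only the spectral decomposition of the adjacency matrix. Let $A$ be the adjacency matrix of the connected or disconnected $k$-regular graph $\Gamma$, let $J$ be the all-ones matrix and $\mathbf 1$ the all-ones vector. Since $\Gamma$ is $k$-regular, $A\mathbf 1=k\mathbf 1$, and $A$ is real symmetric. So we may choose an orthonormal eigenbasis containing $\mathbf 1/\sqrt v$, with every other basis vector orthogonal to $\mathbf 1$ and having eigenvalue at most $\gamma_2$. Here we use the convention that $\gamma_2$ is the largest eigenvalue on $\mathbf 1^\perp$. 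If $\Gamma$ is disconnected then $\gamma_2=k$, and then the bound is at least $v$ and trivial (when $v-k+\gamma_2=v>0$). The key consequence is that the matrix $M:=\gamma_2 I - A + \frac{k-\gamma_2}{v}J$ is positive semidefinite. Indeed, $M\mathbf 1=(\gamma_2-k+k-\gamma_2)\mathbf 1=0$, and on $\mathbf 1^\perp$ the matrix $M$ acts as $\gamma_2 I-A$, which is positive semidefinite there.

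Next we let $S$ be a clique of size $s=\omega(\Gamma)$, take its characteristic vector $\chi$, and evaluate $\chi^\top M\chi\ge 0$. We have $\chi^\top\chi=s$, $\chi^\top A\chi=s(s-1)$ because $S$ is a clique, and $\chi^\top J\chi=s^2$. This gives
\[
0\le \gamma_2 s - s(s-1)+\frac{(k-\gamma_2)s^2}{v}.
\]
Dividing by $s>0$ and multiplying by $v$ gives $v(1+\gamma_2)\ge s\,(v-k+\gamma_2)$.

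To finish we divide by $v-k+\gamma_2$, so we need this quantity to be positive. This is the only delicate point. Since $\operatorname{tr}A=0$ and $k$ is the largest eigenvalue, the case $v\ge2$ with an edge gives $\gamma_2\ge$ the smallest eigenvalue, which in turn is $\ge -k$. The real claim, however, is $\gamma_2>k-v$. I would argue it as follows. Suppose $\gamma_2\le k-v$. Then all eigenvalues on $\mathbf 1^\perp$ are $\le k-v$, so the trace gives $0=\operatorname{tr}A\le k+(v-1)(k-v)$. This forces $k\ge v-1$, hence $\Gamma=K_v$, whose eigenvalue on $\mathbf 1^\perp$ is $-1=k-v$. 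So the degenerate case is exactly the complete graph (for $v\ge2$), where the denominator vanishes and the statement is read as vacuous, or else we note that $\omega=v$ directly. In every other case the denominator is positive, and dividing gives $\omega(\Gamma)=s\le v(1+\gamma_2)/(v-k+\gamma_2)$.

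The main obstacle I expect is this sign and degeneracy bookkeeping for the denominator, including the edgeless graph, where $k=\gamma_2=0$ and the bound gives $1$, which is correct. The positive semidefiniteness of $M$ is the conceptual heart of the proof but is routine.
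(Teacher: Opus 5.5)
Your proof is correct. The paper does not prove this statement; it quotes it from Haemers as background, so there is no in-paper argument to match.

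\textbf{Your route.} You give the quadratic-form (Rayleigh-quotient) proof. Since $A$ commutes with $J$, the matrix $M=\gamma_2 I-A+\frac{k-\gamma_2}{v}J$ has eigenvalue $0$ on $\mathbf 1$ and eigenvalues $\gamma_2-\theta_i\ge 0$ on $\mathbf 1^\perp$, where $\theta_2,\dots,\theta_v$ are the eigenvalues of $A$ there. Evaluating $\chi^\top M\chi\ge 0$ at the clique indicator gives exactly $v(1+\gamma_2)\ge s(v-k+\gamma_2)$. Your trace argument that $v-k+\gamma_2>0$ unless $\Gamma=K_v$ is also right. Your multiplicity convention for $\gamma_2$ is the one that makes the statement true: under the distinct-eigenvalue reading it fails, e.g.\ for $2K_3$, where it would give $\omega\le 0$.

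\textbf{The other standard route} is interlacing. The quotient matrix of $A$ for the partition $\{S,V\setminus S\}$ is
\[
B=\begin{pmatrix} s-1 & k-s+1\\ \frac{s(k-s+1)}{v-s} & k-\frac{s(k-s+1)}{v-s}\end{pmatrix},
\]
with eigenvalues $k$ and $s-1-\frac{s(k-s+1)}{v-s}$. Interlacing gives $\gamma_2\ge s-1-\frac{s(k-s+1)}{v-s}$, which rearranges to the same inequality.

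\textbf{What each buys.} Your version needs only the spectral theorem and gives the equality case immediately: equality forces $M\chi=0$, so every vertex outside $S$ has exactly $(k-\gamma_2)s/v$ neighbours in $S$. The interlacing version adapts most directly to non-regular graphs. There $B$ no longer has $k$ as an eigenvalue, and one bounds $\det B$ using the extreme eigenvalues and a degree bound instead. This is how the Haemers bound of Theorem~\ref{HaB} arises.

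\textbf{One small slip.} For disconnected $\Gamma$ you have $\gamma_2=k$, so the right-hand side equals $k+1$, not something at least $v$. It is still trivially true, and your main argument already covers this case, so that remark can simply be deleted.
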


   \subsection{The Haemers bound}
   
Similarly, when considering a non-regular graph $\Gamma$, the Hoffman bound does not necessarily hold. The Haemers bound is a generalisation of the Hoffman bound to all graphs.

\begin{theorem}[{\cite[2.1.3]{thesis}}]\label{HaB}
	For a non-complete graph $\Gamma$ on $v$ vertices: 
	\begin{equation*} 
		\omega(\Gamma) \le \frac{-v \gamma'_1\gamma'_{v}}{(v-\Delta_\Gamma-1)^2-\gamma'_1\gamma'_{v}},
	\end{equation*}
	where $\gamma'_1, \gamma'_{v}$ are the maximum and minimum eigenvalues of $\overline{\Gamma}$ respectively and $\Delta_\Gamma$ corresponds to the maximum degree of $\Gamma$.
\end{theorem}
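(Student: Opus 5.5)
We view an $s$-clique $S$ of $\Gamma$ as a coclique of the complement $\overline{\Gamma}$ and apply eigenvalue interlacing to the quotient matrix of the two-part partition $\{S, V\setminus S\}$ of $\overline{\Gamma}$. This is Haemers' standard argument for bounding the independence number, transported to the complement.

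Let $A'$ be the adjacency matrix of $\overline{\Gamma}$, let $S$ be a maximum clique of $\Gamma$ with $s=|S|=\omega(\Gamma)$, and put $T=V\setminus S$. If $T=\emptyset$ then $\Gamma$ is complete, which is excluded, so we may assume $1\le s<v$. Let $\delta'=v-1-\Delta_\Gamma$ be the minimum degree of $\overline{\Gamma}$.

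\textbf{Step 1 (quotient matrix).} Since $S$ is a coclique of $\overline{\Gamma}$, every $\overline{\Gamma}$-neighbour of a vertex of $S$ lies in $T$. Let $e$ be the number of $\overline{\Gamma}$-edges between $S$ and $T$; then $e\ge s\delta'$. The quotient matrix of average row sums is
\[
B=\begin{pmatrix}0 & e/s\\ e/(v-s) & b\end{pmatrix},
\]
for some $b\ge 0$. Hence
\[
\det B=\mu_1\mu_2=-\frac{e^2}{s(v-s)}\le 0 .
\]

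\textbf{Step 2 (interlacing).} We invoke the standard interlacing theorem for quotient matrices of partitions. Concretely, $B$ is similar to $P^\top A' P$, where $P$ has orthonormal columns given by the normalised characteristic vectors of $S$ and $T$; Cauchy interlacing then gives $\gamma_1'\ge\mu_1\ge\mu_2\ge\gamma_v'$.

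If $e=0$, then $\delta'=0$. The right-hand side of the claimed bound is then $-v\gamma_1'\gamma_v'/(-\gamma_1'\gamma_v')=v$, so the bound holds trivially. (Here $\gamma_1'>0>\gamma_v'$, because $\overline{\Gamma}$ has at least one edge.)

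If $e>0$, then $\mu_1\mu_2<0$, so $\mu_1>0>\mu_2$. Interlacing gives $0<\mu_1\le\gamma_1'$ and $0<-\mu_2\le-\gamma_v'$, and therefore
\[
\frac{e^2}{s(v-s)}=-\mu_1\mu_2\le-\gamma_1'\gamma_v'.
\]

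\textbf{Step 3 (rearrangement).} Substituting $e\ge s\delta'$ gives $s^2\delta'^2\le-\gamma_1'\gamma_v'\,s(v-s)$. Dividing by $s$ yields $s\delta'^2\le-\gamma_1'\gamma_v'(v-s)$, that is,
\[
s\bigl(\delta'^2-\gamma_1'\gamma_v'\bigr)\le -v\gamma_1'\gamma_v'.
\]
The denominator $\delta'^2-\gamma_1'\gamma_v'$ is positive, since $-\gamma_1'\gamma_v'>0$. Dividing by it and substituting $\delta'=v-\Delta_\Gamma-1$ gives exactly the bound of \Cref{HaB}.

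The only genuinely nontrivial ingredient is the interlacing step. The point needing care is the sign bookkeeping: only because $\det B<0$ can the two interlacing inequalities be multiplied to bound the product $\mu_1\mu_2$ from below by $\gamma_1'\gamma_v'$.
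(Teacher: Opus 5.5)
The paper gives no proof of this statement; it imports the bound from Haemers' thesis. Your argument is the standard proof from that source, namely Haemers' coclique bound $\alpha(G)\le -n\lambda_1\lambda_n/(\delta^2-\lambda_1\lambda_n)$, obtained by interlacing the quotient matrix of the partition $\{S,V\setminus S\}$ and applied here to $G=\overline{\Gamma}$ with $\delta(\overline{\Gamma})=v-1-\Delta_\Gamma$. It is correct as written, including the sign argument that yields $\gamma_1'\gamma_v'\le\mu_1\mu_2$ and the degenerate cases $s=v$ and $e=0$.
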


\subsection{The Clique Adjacency Polynomial}

Soicher first realised the CAP as a specific case of the \textit{block intersection polynomial} (BIP) \cite{soicher_more_2010}. We will introduce the CAP in the same way as Soicher does, introducing relevant concepts in the derivation of the BIP -- a polynomial intended for analysis of block-design structures. For a graph $\Gamma$, and $S, T, W \subseteq V(\Gamma)$ we let 
\begin{equation*} 
		n_i (\Gamma, S, W ) := \left|\{u \in W : \left| \Gamma(u) \cap S\right|=i\}\right|,
\end{equation*} 
and call it the \emph{$i$-th intersection number}. The quantity $n_i (\Gamma, S, W )$ counts the number of vertices in $W$ such that $\left|\Gamma(u) \cap S\right| = i$. We also define
\begin{equation*}
    \lambda_T (\Gamma, W) := |\{u \in W: T \subseteq \Gamma(u)\}|. 
\end{equation*}
Here, $\lambda_T(\Gamma, W)$ counts the number of vertices in $W$ adjacent to all vertices in $T$. 
Let $s := |S|$. For $0 \le j \le s$, define
\begin{equation*} 
    \lambda_j (\Gamma, S, W) := \binom{s}{j}^{-1} \sum_{T \subseteq S, |T|=j}\lambda_T (\Gamma, W).
\end{equation*}  
The quantity $\lambda_j (\Gamma, S, W)$ is the average $\lambda_T (\Gamma, W)$ across all $j$-sized subsets $T$ of $S$. 

Consider a strongly regular graph $\Gamma$ with parameters $(v,k,\lambda, \mu)$. For $j=2$, the value $\lambda_T(\Gamma, W)$ is not necessarily constant for a given $S$ because there are two potential $j$-sized subsets to consider -- adjacent and non-adjacent vertices \footnote{That is, $\lambda_T(\Gamma, W) = \lambda-(s-2)$ or $\lambda_T(\Gamma, W) = \mu-(s-2)$}. However, notice if $S$ corresponds to a clique of $\Gamma$, then the value $\lambda_T(\Gamma, W)$ for $j=2$ is indeed constant because there are no non-adjacent vertices in $S$. In particular, for an $s$-clique we have that $\lambda_T(\Gamma, W) = \lambda-(s-2)$ for all $2$-subsets. We provide all the $\lambda_j(\Gamma, S, W)$ values for $S$ an $s$-clique and $W = V(\Gamma) \setminus S$ in Table \ref{tab:gen1}.

	\begin{table}[!ht]
		\centering
		\begin{tabular}{c c c c}
			$j$ & $T$ & $\lambda_T (\Gamma, W)$ & $\lambda_j (\Gamma, S, W)$ \\
			\midrule
			0 & $\varnothing $ & $v-s$ & $v-s$ \\
			1 & $u \in V(S)$ & $k-s+1$ & $ k-s+1$ \\
			2 & $\{u,v\}$ for $u,w \in V(S)$ & $\lambda-s+2$ & $\lambda-s+2$ \\
			\bottomrule
		\end{tabular}
		\caption{Different $\lambda_j$ values for a strongly regular graph when $S$ corresponds to an $s$-clique and $W = V(\Gamma) \setminus S$.}
		\label{tab:gen1}
	\end{table}

Since there is no non-edge valency requirement $\mu$ in Table \ref{tab:gen1}, these values also hold for edge-regular graphs. 
We can also link $n_i (\Gamma, S, W )$ to $\lambda_j (\Gamma, S, W)$ through a counting argument explored by Soicher.

    \begin{theorem}[Soicher, {\cite[Theorem 2.1]{soicher_more_2010}}]
    \label{counts}
    Let $\Gamma$ be a graph, let $S, W \subseteq V(\Gamma)$, with $s:=|S|$. For $0 \le j \le s$ we have
    \begin{equation*}
        \sum^s_{i=0} \binom{i}{j} n_i (\Gamma, S, W ) = \binom{s}{j}\lambda_j (\Gamma, S, W)
    \end{equation*}
    \end{theorem}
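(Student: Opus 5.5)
The statement to prove is Soicher's counting identity (Theorem \ref{counts}). We prove it by double counting, with no appeal to earlier results. Fix $j$ with $0\le j\le s$ and consider the set of pairs
\[
\mathcal P_j:=\{(u,T): u\in W,\ T\subseteq S,\ |T|=j,\ T\subseteq \Gamma(u)\}.
\]
We count $|\mathcal P_j|$ in two ways and compare.

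\textbf{Counting by the vertex $u$.} For a fixed $u\in W$, the admissible $T$ are exactly the $j$-subsets of $\Gamma(u)\cap S$. If $|\Gamma(u)\cap S|=i$, there are $\binom{i}{j}$ of them. This includes the conventions $\binom{i}{0}=1$ (the empty set $T=\varnothing$) and $\binom{i}{j}=0$ when $i<j$. Since $|\Gamma(u)\cap S|\in\{0,\dots,s\}$, we group the vertices of $W$ by this intersection size. By definition there are $n_i(\Gamma,S,W)$ vertices with intersection size $i$, so
\[
|\mathcal P_j|=\sum_{i=0}^{s}\binom{i}{j}n_i(\Gamma,S,W).
\]

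\textbf{Counting by the subset $T$.} For a fixed $j$-subset $T\subseteq S$, the admissible $u$ are the vertices of $W$ adjacent to every vertex of $T$. There are $\lambda_T(\Gamma,W)$ such vertices. Hence
\[
|\mathcal P_j|=\sum_{T\subseteq S,\,|T|=j}\lambda_T(\Gamma,W)=\binom{s}{j}\lambda_j(\Gamma,S,W),
\]
where the last equality is the definition of $\lambda_j$ as an average over the $\binom{s}{j}$ subsets of size $j$.

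Equating the two expressions for $|\mathcal P_j|$ gives the identity. The only care needed is at the boundary case $j=0$. There $T=\varnothing$ is contained in every $\Gamma(u)$, so both sides equal $|W|$, and the formula remains consistent. The argument has no genuine obstacle; the one point to watch is using the convention $\binom{i}{j}=0$ for $i<j$ in the first count.
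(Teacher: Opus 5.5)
Your proof is correct and is the same double count of pairs $(u,T)$, with $u\in W$ and $T$ a $j$-subset of $\Gamma(u)\cap S$, that the paper relies on when it cites Soicher's Theorem 2.1 as a counting argument without reproducing a proof. Your argument also needs no disjointness of $S$ and $W$, which matches the generality of the statement.
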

    
Now, we introduce Soicher's BIP. For $z$ a non-negative integer, define the polynomial 
\[
P(x,z) := x(x -1) \cdots (x-z + 1),
\]
and for real number sequences $[m_0, \dots ,m_s]$, $[\lambda_0, \dots , \lambda_t ]$, with $t \le s$, Soicher defines the block intersection polynomial
	\begin{equation}
        \label{BIP1}
	    B(x, [m_0, \dots ,m_s], [\lambda_0, \dots , \lambda_t ]) :=
\sum^t_{j=0}\binom{t}{j} P(-x, t - j) \left[P(s, j)\lambda_j - \sum^s_{i=j}P(i, j)m_i\right].
	\end{equation}
	
	Using the equation from Theorem \ref{counts} and (\ref{BIP1}) we can rearrange the BIP into a useful form. 

    \begin{corollary}[Soicher, {\cite[Theorem 3.1]{soicher_more_2010}}]
    \label{rearranged}
        Let $s$ and $t$ be non-negative integers, with $s \ge t$, let $n_0, \dots ,n_s$, $m_0, \dots ,m_s$, and $\lambda_0, \dots , \lambda_t$ be
real numbers, such that 
\begin{equation*}
        \sum^s_{i=0} \binom{i}{j} n_i = \binom{s}{j}\lambda_j 
\end{equation*} 
      Then  \begin{equation*} B(x, [m_0, \dots ,m_s], [\lambda_0, \dots , \lambda_t ]) = \sum_{i=0}^s P(i-x,t)(n_i -m_i)\end{equation*}
    \end{corollary}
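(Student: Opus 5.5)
The plan is to substitute the hypothesis directly into the bracket of (\ref{BIP1}) and then collapse the sum over $j$ with a Vandermonde-type identity for falling factorials. There are two steps: first rewrite each bracket as a sum over $i$ weighted by $n_i-m_i$, then interchange the order of summation.

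\textbf{Step 1: rewrite the brackets.} For $0\le j\le t$, note that $P(s,j)=j!\binom{s}{j}$ and $P(i,j)=j!\binom{i}{j}$ for every non-negative integer $i$. Multiplying the hypothesis $\sum_{i=0}^s\binom{i}{j}n_i=\binom{s}{j}\lambda_j$ by $j!$ gives
\[
P(s,j)\lambda_j=\sum_{i=0}^s P(i,j)\,n_i .
\]
Since $P(i,j)=0$ whenever $0\le i<j$, the sum $\sum_{i=j}^s P(i,j)m_i$ may be extended to run over $i=0,\dots,s$. Hence the $j$-th bracket in (\ref{BIP1}) equals $\sum_{i=0}^s P(i,j)(n_i-m_i)$.

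\textbf{Step 2: interchange the sums.} Substituting Step 1 into (\ref{BIP1}) and swapping the order of summation gives
\[
B(x,[m_0,\dots,m_s],[\lambda_0,\dots,\lambda_t])=\sum_{i=0}^s (n_i-m_i)\sum_{j=0}^t\binom{t}{j}P(-x,t-j)P(i,j).
\]
It then suffices to show that the inner sum equals $P(i-x,t)$.

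\textbf{Main obstacle: the falling-factorial Vandermonde identity.} The remaining fact is
\[
P(a+b,t)=\sum_{j=0}^t\binom{t}{j}P(a,t-j)P(b,j)
\]
as a polynomial identity in $a$ and $b$; setting $a=-x$ and $b=i$ yields $P(i-x,t)$.

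I would prove it in two stages.
\begin{enumerate}[(i)]
\item For non-negative integers $a,b$, count injective maps $\{1,\dots,t\}\to A\sqcup B$ with $|A|=a$ and $|B|=b$. Classify each map by the $j$-subset of $\{1,\dots,t\}$ sent into $B$: there are $\binom{t}{j}$ choices of this subset, $P(b,j)$ injections of it into $B$, and $P(a,t-j)$ injections of the rest into $A$.
\item Both sides are polynomials in $(a,b)$ that agree on all of $\mathbb{Z}_{\ge0}^2$, which is Zariski dense. Hence they agree identically, and in particular at $a=-x$ for real or indeterminate $x$.
\end{enumerate}
Alternatively, one can compare coefficients of $z^t/t!$ in $(1+z)^a(1+z)^b=(1+z)^{a+b}$, using $(1+z)^a=\sum_n P(a,n)z^n/n!$.

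Combining the two steps gives $B=\sum_{i=0}^s P(i-x,t)(n_i-m_i)$, as required.
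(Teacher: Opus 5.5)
Your proof is correct. It follows the route the paper indicates: the paper gives no proof of its own and cites Soicher, remarking only that the identity comes from substituting the counting relation of Theorem~\ref{counts} into (\ref{BIP1}) and rearranging, and you do exactly that. You substitute $P(s,j)\lambda_j=\sum_i P(i,j)n_i$ (needed only for $0\le j\le t$), interchange the sums, and collapse the inner sum using the falling-factorial Vandermonde convolution $P(i-x,t)=\sum_{j}\binom{t}{j}P(-x,t-j)P(i,j)$; your justification of that convolution correctly supplies the step the paper leaves implicit.
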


    Furthermore for a BIP of this form, Soicher provides the following:

    \begin{theorem}[Soicher, {\cite[Theorem 3.1]{soicher_more_2010}}]
    \label{reform}
        Consider equation \begin{equation*} B(x) = \sum_{i=0}^s P(i-x,t)(n_i -m_i)\end{equation*} with real number sequences $[m_0, \dots ,m_s], [n_0, \dots ,n_s]$. If $m_i \le n_i$ and $t$ is even, then for every integer $m$, we have $B(m) \ge 0$.
    \end{theorem}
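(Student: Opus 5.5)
The plan is to show that $B(m)$ is a sum of non-negative terms. Since $B(x)=\sum_{i=0}^s P(i-x,t)(n_i-m_i)$ and $n_i-m_i\ge 0$ for every $i$ by hypothesis, it suffices to prove one claim: for every integer $m$, every $i$ with $0\le i\le s$, and every even non-negative integer $t$,
\[
P(i-m,t)=(i-m)(i-m-1)\cdots(i-m-t+1)\ \ge\ 0 .
\]
Then each summand $P(i-m,t)(n_i-m_i)$ is a product of two non-negative reals, and hence $B(m)\ge 0$.

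To prove the claim, write $a:=i-m$. Because both $i$ and $m$ are integers, $a$ is an integer, and $P(a,t)$ is the product of the $t$ consecutive integers $a,a-1,\dots,a-t+1$. The case $t=0$ is immediate, since $P(a,0)=1$ is the empty product. For $t\ge 2$ we split into three cases according to where these consecutive integers lie.
\begin{enumerate}[(i)]
\item If $a\ge t-1$ and $a\ge 0$, then every factor is non-negative, so the product is non-negative.
\item If $a<0$, then every factor is negative. The product of an even number $t$ of negative integers is positive.
\item Otherwise $0\le a\le t-2$. Then $0$ lies among $a,a-1,\dots,a-t+1$, so one factor vanishes and $P(a,t)=0$.
\end{enumerate}
In fact (iii) can be absorbed: whenever $a-t+1\le 0\le a$, some factor is zero.

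The only point requiring care is integrality, and it is where the hypothesis that $m$ is an integer is used. If $a$ were a non-integer real, the factors could straddle $0$ without any of them vanishing. The product would then contain an odd number of negative factors, and $P(a,t)$ could be negative. Integrality forces the dichotomy ``either some factor is $0$, or all factors have the same sign,'' and evenness of $t$ then handles the all-negative case. No property of the $\lambda_j$ is needed at this stage: the identity in Corollary~\ref{rearranged} is what allows $B$ to be written in the stated form, and we take that form as given.
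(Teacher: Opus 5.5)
Your proof is correct and takes the standard route. The paper gives no proof of its own and only cites Soicher's Theorem~3.1, whose argument is exactly yours: for integers $i$ and $m$ and even $t$, $P(i-m,t)$ is a product of an even number of consecutive integers and so is non-negative, hence every term $P(i-m,t)(n_i-m_i)$ is non-negative. One small overstatement: in your aside about non-integer $a$, factors straddling $0$ need not give an \emph{odd} number of negative factors, but that remark plays no role in the proof.
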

    
The following is an immediate consequence of Soicher's block-intersection polynomial theorem applied with \(t=2\) and \(m_i=0\) for all \(i\).

	\begin{theorem}[Soicher, {\cite[Theorem 3.1]{soicher_more_2010}}]
		\label{BIP}
	Suppose a graph $\Gamma$ contains a clique of size $s$, with vertex set denoted by $S$. Define $W := V(\Gamma) \setminus S$ and $\lambda_j:=\lambda_j(\Gamma, S, W)$. Then $B(x, [0^{s+1}], [\lambda_0, \lambda_1, \lambda_2]) \ge 0$ for every integer $x$.
	\end{theorem}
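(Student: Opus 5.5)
The plan is to chain together the three results quoted just before the statement: Theorem~\ref{counts}, Corollary~\ref{rearranged} and Theorem~\ref{reform}. We use $t=2$ and $m_i=0$ throughout.

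\textbf{Step 1 (the counting identity).} Let $S$ be the vertex set of an $s$-clique of $\Gamma$ and put $W:=V(\Gamma)\setminus S$. Set $n_i:=n_i(\Gamma,S,W)$ for $0\le i\le s$ and $\lambda_j:=\lambda_j(\Gamma,S,W)$ for $j=0,1,2$. By Theorem~\ref{counts} we have
\[
\sum_{i=0}^s\binom{i}{j}n_i=\binom{s}{j}\lambda_j \qquad (0\le j\le 2).
\]
This is exactly the hypothesis of Corollary~\ref{rearranged} with $t=2$. We need $s\ge t=2$ here. The cases $s\le 1$ are degenerate: either the BIP with $t=2$ is not defined, or it can be checked by hand. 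So I would assume $s\ge 2$, which is the only case of interest for clique bounds.

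\textbf{Step 2 (rewrite the BIP).} Apply Corollary~\ref{rearranged} with $m_0=\dots=m_s=0$. This gives
\[
B(x,[0^{s+1}],[\lambda_0,\lambda_1,\lambda_2])=\sum_{i=0}^s P(i-x,2)\,n_i=\sum_{i=0}^s (i-x)(i-x-1)\,n_i .
\]

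\textbf{Step 3 (nonnegativity).} Each $n_i$ is the cardinality of a set, so $n_i\ge 0=m_i$. Since $t=2$ is even, Theorem~\ref{reform} yields $B(m)\ge 0$ for every integer $m$, as required.

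One can also see this directly. For an integer $x$, the factor $(i-x)(i-x-1)$ is a product of two consecutive integers. Such a product is never negative: it is zero if one of the factors is $0$, and otherwise both factors have the same sign. Hence every summand is nonnegative.

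There is no real obstacle. The only care needed is to check that the hypotheses of Corollary~\ref{rearranged} are met, namely the indexing $t\le s$ and the identity from Theorem~\ref{counts} for $j=0,1,2$, and to note that integrality of $x$ is essential in the last step.
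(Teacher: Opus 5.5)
Your proposal is correct and follows the same route as the paper, which presents this as an immediate consequence of Soicher's block intersection polynomial result with $t=2$ and $m_i=0$. The chain is the same: the counting identity of Theorem~\ref{counts}, then the rewriting via Corollary~\ref{rearranged} as $\sum_i (i-x)(i-x-1)n_i$, then nonnegativity via Theorem~\ref{reform} (or directly, as you note, since $n_i\ge 0$ and a product of consecutive integers is nonnegative). Your caveat that $s\ge 2$ is needed for $\lambda_2$ and the hypothesis $s\ge t$ to make sense is fair; the paper leaves it implicit.
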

  
    We can use the contrapositive of \Cref{BIP} to consider when cliques of a certain size cannot exist. 
    For a graph $\Gamma$, if we can represent $[\lambda_0, \lambda_1, \lambda_2]$ in terms of the size of a potential clique $s$, we can simply check for the first $s$ when $B(x, [0^{s+1}], [\lambda_0, \lambda_1, \lambda_2]) < 0$ and produce an upper bound on $\omega(\Gamma)$.

    Soicher naturally applied \Cref{BIP} to the existence of $s$-cliques in edge-regular graphs for which, as we have seen in Table \ref{tab:gen1}, $\lambda_j$ values are constants in terms of $s$ and graph parameters $(v,k,\lambda)$. He called this polynomial the CAP, and initially presents it in the form
	\begin{equation}
		\CA_\Gamma(x,s) := B(x, [0^{s+1}], [\lambda_0,\lambda_1,\lambda_2]), \label{CAP}
	\end{equation}
	where $\Gamma$ is a $(v,k,\lambda)$ edge-regular graph, $S \subseteq V(\Gamma)$ which resembles a potential clique of size $s\ge 2$, $W = V(\Gamma) \setminus S$ and $\lambda_j := \lambda_j(\Gamma,S,W)$. Expanding this by using (\ref{BIP1}), we see that
	\begin{equation}
		\CA_\Gamma(x,s) = x(x+1)\lambda_0 -2x s \lambda_1+s(s-1)\lambda_2.  \label{CAP2}
	\end{equation}

	Using Table \ref{tab:gen1} we can fill in the values of $\lambda_j$ to obtain the final form of Soicher's CAP, which is how we, and Soicher in his later papers \cite{soicher_edge_2015}, will define it. Let $\Gamma$ be an edge-regular graph, with parameters $(v,k, \lambda)$. The CAP is presented as:
	\begin{equation*}
    \label{CAP1}
		\CA_\Gamma(x,s) := x(x+1)(v-s) -2x s (k-s+1)+s(s-1)(\lambda -s+2). 
	\end{equation*}
    Thus, if this quadratic is negative for some integer $x$, then no $s$-clique exists.

\begin{rexample}
    Consider $\Gamma$ as the Paley graph on $17$ vertices. This is a $(17,8,3)$ edge-regular graph, and provides an example of how the CAP can be used effectively. It is well known that $\omega(\Gamma) = 3$.
From the graph of $\CA_\Gamma(x,4)$ (see \Cref{paleyex}), we see that $\CA_\Gamma(1,4) < 0$, so a clique of size $4$ cannot exist.
\end{rexample}
\begin{figure}[!ht]
\begin{tikzpicture}
\begin{axis}[
    axis lines=middle,
    xlabel={$x$},
    ylabel={$\CA_\Gamma(x,4)$},
    domain=-1:5,
    samples=100,
    grid=both,
    width=10cm,
    height=6cm,
    enlargelimits=true,
    ymin=-10, ymax=50,
    thick
]
    \addplot[blue, thick] {13*x^2 - 27*x + 12};

\end{axis}
\end{tikzpicture}
\caption{$\CA_\Gamma(x,4)$ for $(17,8,3)$ edge-regular graph $\Gamma$}
\label{paleyex}
\end{figure}

\section{Generalising Soicher's result to all graphs and further discussion}

Soicher's form of CAP hinges on \Cref{counts}, however this identity, as proven by Soicher, holds for all types of graphs, not only strictly edge-regular ones. Consider the CAP in the form of (\ref{CAP}). Instead of naturally taking $\Gamma$ to be edge-regular as Soicher did, we drop this requirement. To obtain a final formula for the generalised CAP we must now evaluate the $\lambda_j$ coefficients generally, where $S$ corresponds to a potential clique in any graph. We present this in Table \ref{tab:gen} where \(\langle k\rangle_S\) is the average degree of the vertices in \(S\), and \(\langle\lambda\rangle_S\) is the average number of common neighbours in \(\Gamma\) over unordered pairs of vertices in \(S\). These expressions hold under the assumption that $S$ induces a clique, so that each vertex in $S$ has exactly $s-1$ neighbours within $S$.

	\begin{table}[h]
		\centering
		\begin{tabular}{c c c c}
			$j$ & $T$ ($j$-sized subsets of $V(S)$) & $\lambda_T (\Gamma, W)$ & $\lambda_j (\Gamma, S, W)$ \\
			\midrule
			0 & $\varnothing $ & $v-s$ & $v-s$ \\
			1 & $u \in V(S)$ & $\deg(u)-s+1$ & $\langle k \rangle_S -s+1$ \\
			2 & $\{u,w\}$ for $u,w \in V(S)$ & $|\Gamma(u) \cap \Gamma(w)|-s+2$ & $\langle \lambda \rangle_S -s+2$ \\
			\bottomrule
		\end{tabular}
		\caption{Different $\lambda_j$ values for $S$ corresponding to a clique in $\Gamma$ where $W = V(\Gamma) \setminus S$.}
		\label{tab:gen}
	\end{table}

    \begin{proof}[Proof of \Cref{gen}]
Suppose, for contradiction, that \(S\) is an \(s\)-clique of \(\Gamma\). Put \(W=V(\Gamma)\setminus S\). By \Cref{tab:gen},
$\lambda_0=v-s$, $\lambda_1=\langle k\rangle_S-s+1$, $\lambda_2=\langle\lambda\rangle_S-s+2$. Therefore,
\[
B(x,[0^{s+1}],[\lambda_0,\lambda_1,\lambda_2])
=
\CA_\Gamma(x,s,\langle k\rangle_S,\langle\lambda\rangle_S)
\]
where
	\begin{equation}
		\CA_\Gamma(x,s) := x(x+1)(v-s) -2x s (\langle k \rangle_S -s+1)+s(s-1)(\langle \lambda \rangle_S -s+2). 
	\end{equation}
By \Cref{BIP}, this quantity is non-negative for every integer \(x\). This contradicts the assumed existence of an integer \(x\) for the feasible pair arising from \(S\). Hence no \(s\)-clique exists.    
	\end{proof}

    Note in the case of an edge-regular graph $\Gamma$, (\ref{CAP3}) is equivalent to the form of (\ref{CAP1}).
	
	For a potential $s$-clique $S$, the constants $\langle k \rangle_S$ and $\langle \lambda \rangle_S$ could take on a range of feasible values for a given $s$. It helps to consider the CAP in the form presented in (\ref{CAP1}), but with $k$, $\lambda$ as new variables representing feasible $\langle k \rangle_S$ and $\langle \lambda \rangle_S$ respectively. So we present the generalised CAP as $\CA_\Gamma(x,s,k,\lambda)$, in four input variables. 
    
    The next step is understanding how generalised $(k, \lambda)$ variables affect CAP values. To assist with further analysis we introduce a new polynomial called the Clique Existence Polynomial (CEP) denoted as $\CE_\Gamma(s,k,\lambda)$ which removes the $x$ variable from the CAP.
    
    \begin{lemma}
\label{rearrange}
	Let $\Gamma$ be a graph with parameters $(v,k,\lambda)$ for a potential clique of size $s<v$. We define the following terms.
    \begin{align*} 
    \textnormal{T}_{1_\Gamma}(s) &:= (v - (3 + 2k)s + 2s^2), \\ 
	\textnormal{T}_{2_\Gamma}(s) &:= 4(v - s)((3 + \lambda)s^2 - s^3 - (2 + \lambda)s),	\\
	\textnormal{T}_{3_\Gamma}(s) &:= \left(v - s\right)\left(1 - \left|2\left(\frac{-v + (3 + 2k)s - 2s^2}{2(v - s)}- \left\lfloor \frac{-v + (3 + 2k)s - 2s^2}{2(v - s)} \right\rfloor \right) -1 \right| \right).
    \end{align*}
If $\CE_\Gamma(s,k,\lambda) := \textnormal{T}_{1_\Gamma}(s)^2 - \textnormal{T}_{2_\Gamma}(s) - \textnormal{T}_{3_\Gamma}(s)^2 >0$, then no $s$-clique of $\Gamma$ can have average degree and edge parameters $(k,\lambda)$.
	\end{lemma}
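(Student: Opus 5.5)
The plan is to reduce the statement to \Cref{gen}, or more precisely to its proof, applied to a single pair $(k,\lambda)$. It suffices to show that if $\CE_\Gamma(s,k,\lambda)>0$ then there is an integer $x$ with $\CA_\Gamma(x,s,k,\lambda)<0$. Indeed, if $S$ were an $s$-clique with $(\langle k\rangle_S,\langle\lambda\rangle_S)=(k,\lambda)$, the proof of \Cref{gen} (that is, \Cref{BIP} together with \Cref{tab:gen}) gives $\CA_\Gamma(x,s,k,\lambda)\ge 0$ for every integer $x$, which would be a contradiction.

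First I would write $\CA_\Gamma(x,s,k,\lambda)$ as a quadratic $ax^2+bx+c$ in $x$. Expanding gives $a=v-s$, which is positive since $s<v$. The linear coefficient is
\[
b=(v-s)-2s(k-s+1)=v-(3+2k)s+2s^2=\textnormal{T}_{1_\Gamma}(s).
\]
The constant term is $c=s(s-1)(\lambda-s+2)=(3+\lambda)s^2-s^3-(2+\lambda)s$. Hence $4ac=\textnormal{T}_{2_\Gamma}(s)$, and the discriminant is $D=\textnormal{T}_{1_\Gamma}(s)^2-\textnormal{T}_{2_\Gamma}(s)$. These are routine expansions, which I would check directly.

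Next, complete the square:
\[
\CA_\Gamma(x,s,k,\lambda)=a(x-x_0)^2-\frac{D}{4a}, \qquad x_0=-\frac{b}{2a}=\frac{-v+(3+2k)s-2s^2}{2(v-s)}.
\]
This $x_0$ is exactly the quantity appearing inside $\textnormal{T}_{3_\Gamma}(s)$. Let $f=x_0-\lfloor x_0\rfloor\in[0,1)$. Then the distance from $x_0$ to the nearest integer is $d=\min(f,1-f)$, and the elementary identity $1-|2f-1|=2\min(f,1-f)$ gives $\textnormal{T}_{3_\Gamma}(s)=2ad$. Because the parabola opens upward, its minimum over integers is attained at an integer nearest $x_0$ (either $\lfloor x_0\rfloor$ or $\lceil x_0\rceil$), and that minimum equals $ad^2-D/(4a)$.

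Finally, this minimum is negative if and only if $4a^2d^2<D$, i.e. $\textnormal{T}_{3_\Gamma}(s)^2<\textnormal{T}_{1_\Gamma}(s)^2-\textnormal{T}_{2_\Gamma}(s)$, which is exactly $\CE_\Gamma(s,k,\lambda)>0$. So an integer $x$ with negative CAP exists, and the argument closes as described at the start. I expect no real obstacle. The only delicate points are verifying that the expression in $\textnormal{T}_{3_\Gamma}$ really encodes $2a$ times the distance to the nearest integer (including the boundary case $f=1/2$), and keeping the sign of $x_0$ consistent with $b$.
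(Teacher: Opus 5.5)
Your proposal is correct and takes essentially the same route as the paper. You write $\CA_\Gamma(x,s,k,\lambda)$ as $ax^2+bx+c$ with $a=v-s$, $b=\textnormal{T}_{1_\Gamma}(s)$ and $4ac=\textnormal{T}_{2_\Gamma}(s)$, and use the criterion that some integer $x$ gives a negative value exactly when $b^2-4ac>\bigl(2a\cdot\mathrm{dist}(-b/(2a),\mathbb{Z})\bigr)^2=\textnormal{T}_{3_\Gamma}(s)^2$. The paper only asserts this criterion, whereas you justify it by completing the square and the identity $1-|2f-1|=2\min(f,1-f)$, then close via \Cref{BIP} exactly as in the proof of \Cref{gen}.
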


Strictly speaking, because of the nearest-integer correction term, $\CE_\Gamma$ is a piecewise-polynomial function rather than a polynomial. We retain the name CEP to emphasise its role as the integer-existence analogue of the CAP.
This polynomial is derived by rearranging the CAP into the form $a x^2 +bx+c$. The existence of such an integer $x$ as required for the non-existence of a clique, only relies on coefficients $a,b$ and $c$ themselves. Specifically, there exists an integer $x$ such that $f(x)<0$ if, and only if, $b^2-4ac-a^2\left(1-\left|2\left(\frac{-b}{2a}- \left\lfloor \frac{-b}{2a} \right\rfloor\right)-1\right|\right)^2 > 0.$ \footnote{Note that $a = v-s \ge 0$, since a clique size is never bigger than the number of vertices in a graph. 
In the case $a=0$, the CAP reduces to a linear function, which can be checked directly.}

    Again, considering $\Gamma$ as the Paley graph on $17$ vertices we can produce another plot, this time using the CEP.
    
    	\begin{figure}[!ht]
	\begin{tikzpicture}
\begin{axis}[
    xlabel={$s$},
    ylabel={$\text{CE}_\Gamma(s)$},
	xtick={2,3,4,5,6},
    domain=2:6,
    samples at={2,3,4,5,6},
    only marks,
    mark=*,
	width=10cm,
    height=6cm,
    yticklabel style={/pgf/number format/fixed},
    extra y ticks={0},
    extra y tick labels={0},
    extra y tick style={grid=major},
    axis lines=middle,
    grid=both,
    set layers,                
    layers/axis on top/.style={axis on top}, 
    thick
]

\addplot+[
    blue,
]
{((17 - (3 + 2*8)*x + 2*x^2)^2) 
  - 4*(17 - x)*(-((2 + 3)*x) + (3 + 3)*x^2 - x^3)
  - ((17 - x)*(1 - abs(-1 + 2*((-17 + (3 + 2*8)*x - 2*x^2)/(2*(17 - x)) - floor((-17 + (3 + 2*8)*x - 2*x^2)/(2*(17 - x)))))))^2
};

\end{axis}
\end{tikzpicture}
\caption{$\CE_\Gamma(s)$ for $(17,8,3)$ edge-regular graph $\Gamma$}
\end{figure}

See that the same information is recovered as was in \Cref{paleyex}, a clique of size $4$, cannot exist in $\Gamma$. Now we can conveniently plot over $s$, instead of producing multiple plots over $x$ for a given $s$. The CEP is particularly useful in computation, and can be used in conjunction with the CAP to analyse the behaviour of the $k$ and $\lambda$ variables. This leads to the following lemma

    \begin{lemma}
	\label{props}
    \leavevmode
	\begin{enumerate}
    \item For fixed $s, k$, as $\lambda$ increases, $\CE_\Gamma(s,k,\lambda)$ is monotonically non-increasing.
	\item For fixed $s, \lambda$ and $k\ge s-1$, as $k$ decreases, $\CE_\Gamma(s,k,\lambda)$ is monotonically non-increasing. 
	\end{enumerate}
	\end{lemma}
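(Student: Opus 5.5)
The plan is to read the three terms of $\CE_\Gamma$ back in terms of the quadratic $\CA_\Gamma(x,s,k,\lambda)=ax^2+bx+c$ and then treat the $\lambda$-dependence and the $k$-dependence separately.

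\textbf{Identifying the coefficients.} Expanding the generalised CAP of \Cref{gen} gives
\begin{align*}
a&=v-s,\\
b&=(v-s)-2s(k-s+1)=v-(3+2k)s+2s^2=\textnormal{T}_{1_\Gamma}(s),\\
c&=s(s-1)(\lambda-s+2)=(3+\lambda)s^2-s^3-(2+\lambda)s .
\end{align*}
From these, $\textnormal{T}_{2_\Gamma}(s)=4ac$. Put $y:=-b/(2a)$ and $t:=y-\lfloor y\rfloor$. Then $1-|2t-1|=2\min(t,1-t)=2\,\mathrm{dist}(y,\mathbb Z)$, so $\textnormal{T}_{3_\Gamma}(s)=2a\,\mathrm{dist}(y,\mathbb Z)$. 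Hence
\[
\CE_\Gamma(s,k,\lambda)=4a^2\bigl(y^2-\mathrm{dist}(y,\mathbb Z)^2\bigr)-4ac .
\]
Here $a>0$ because $s<v$. Also $a$ depends only on $s$, $y$ depends only on $(s,k)$, and $c$ depends only on $(s,\lambda)$.

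\textbf{Part (1).} With $s$ and $k$ fixed, only $\textnormal{T}_{2_\Gamma}$ involves $\lambda$. Its $\lambda$-coefficient is $4(v-s)s(s-1)\ge 0$, since $v>s$ and $s\ge 1$. So $\textnormal{T}_{2_\Gamma}$ is non-decreasing in $\lambda$, and $\CE_\Gamma$ is non-increasing in $\lambda$.

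\textbf{Part (2).} With $s$ and $\lambda$ fixed, the term $4ac$ is constant. Moreover
\[
y=-\tfrac12+\frac{s(k-s+1)}{v-s}
\]
is an increasing affine function of $k$, and the hypothesis $k\ge s-1$ is exactly the condition $y\ge -\tfrac12$. It therefore suffices to show that
\[
g(y):=y^2-\mathrm{dist}(y,\mathbb Z)^2
\]
is non-decreasing on $[-\tfrac12,\infty)$; then decreasing $k$ decreases $y$, and hence cannot increase $\CE_\Gamma$.

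\textbf{The main step.} The piecewise analysis of $g$ is where the work lies, and it is also where the restriction $k\ge s-1$ is used. On $[n,n+\tfrac12]$ the nearest integer is $n$, so
\[
g(y)=y^2-(y-n)^2=2ny-n^2,
\]
which is non-decreasing when $n\ge 0$. On $[n+\tfrac12,n+1]$ the nearest integer is $n+1$, so
\[
g(y)=2(n+1)y-(n+1)^2,
\]
which is non-decreasing when $n\ge -1$. The intervals meeting $[-\tfrac12,\infty)$ are $[-\tfrac12,0]$, where $g\equiv 0$, and those with $n\ge 0$. Since $g$ is continuous (the two formulas agree at each half-integer), it is non-decreasing on all of $[-\tfrac12,\infty)$. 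For $y<-\tfrac12$ the slopes become negative, which explains why the lemma needs $k\ge s-1$.
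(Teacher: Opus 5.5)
Your proof is correct and follows essentially the same route as the paper. You identify $\textnormal{T}_{1_\Gamma}=b$ and $\textnormal{T}_{2_\Gamma}=4ac$, read $\textnormal{T}_{3_\Gamma}$ as the nearest-integer correction, note that only $b$ depends on $k$, and run a piecewise-linear slope analysis on the region $b\le a$ (equivalently $k\ge s-1$). The only difference is bookkeeping: you normalise to $y=-b/(2a)$ and write the correction as $2a\,\mathrm{dist}(y,\mathbb{Z})$ instead of using $m=b \bmod 2a$, which also gets the constant terms on each piece right and makes continuity at the breakpoints explicit, both of which the paper glosses over.
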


While part \textit{(1)} of \Cref{props} is clear from the definition of the CAP in the form of (\ref{CAP1}), the proof of part \textit{(2)} is less intuitive. First, consider an arbitrary quadratic $f(x) = ax^2 + bx + c$, with $a > 0$ and $b \le a$. The general form of the CEP is given as $b^2-4ac-a^2\left(1-\left|2\left(\frac{-b}{2a}- \left\lfloor \frac{-b}{2a} \right\rfloor\right)-1\right|\right)^2$. Since $k$ only affects the $b$ value in the CEP, we need to consider how $b^2-a^2\left(1-\left|2\left(\frac{-b}{2a}- \left\lfloor \frac{-b}{2a} \right\rfloor\right)-1\right|\right)^2$ behaves as $b$ changes. Since $b \le a$, we can rewrite this as $b^2 - (a - |m  - a|)^2$, where $m = b \bmod 2a$. Given a fixed $a$, we can consider this as a piecewise function in terms of $b$:
		\[f(b)=
		\begin{cases}
			b^2 - m^2 & \textnormal{if } 0 \le m \le a, \\
			b^2 - (2a - m)^2 & \textnormal{if } a < m < 2a.
		\end{cases}
		\]
		\vspace{-0.1cm}
		Since $b \le a$, it means that $m = b + 2na$ for some $n \in \mathbb{N} \cup \{0\}$. We can rewrite the piecewise cases by substituting $m = b + 2na$ and simplifying:
		\[f(b)=
		\begin{cases}
			-4nab-4na^2 & \textnormal{if } 0 \le b + 2na \le a, \\
			 4(1-n)ab +4(1-n)a^2 & \textnormal{if } a < b + 2na < 2a.
		\end{cases}
		\] 
		\vspace{-0.1cm}
		\leavevmode
		Differentiating by $b$ gives: 	
		\[f'(b)=
		\begin{cases}
			-4na & \textnormal{if } 0 \le b + 2na \le a, \\
			 4(1-n)a & \textnormal{if } a < b + 2na < 2a.
		\end{cases}
		\]  
		We can see that for all relevant values of $n$ which constitute $b \le a$, we have $f(b)$ monotonically non-increasing.\footnote{Note when $n=0$, we have that $b \le a$ only holds when $f'(b) = 0$, not for when $f'(b) = 4a$, since in this case we have $b>a$.} This means that the CEP is monotonically non-increasing for $v - (3 + 2 k) s + 2 s^2 \le v-s$, which can be rearranged to $k \ge s-1$. 
        \Cref{props} leads to a useful corollary.
        
   	\begin{corollary} 
    \label{visual}
		Consider a graph $\Gamma$ and a potential $s$-clique $S$. Let $k^*_s$ be any lower bound on feasible $k$ greater than or equal to $s-1$, and $\lambda^*_s$ be any upper bound on feasible $\lambda$. If $\CE_\Gamma(s, k^*_s, \lambda^*_s) > 0$ then no clique of size $s$ can exist in $\Gamma$.
	\end{corollary}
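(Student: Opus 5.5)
The plan is to reduce \Cref{visual} to \Cref{gen} by way of \Cref{rearrange}, using the monotonicity in \Cref{props} to pass from the single corner point $(k^*_s,\lambda^*_s)$ to every feasible pair. Take as admissible set
\[
\mathcal F_s(\Gamma)=\{(k,\lambda): k\ge k^*_s,\ \lambda\le\lambda^*_s\}.
\]
By the hypotheses on $k^*_s$ and $\lambda^*_s$, this set contains $\left(\langle k\rangle_S,\langle\lambda\rangle_S\right)$ for every $s$-clique $S$ of $\Gamma$. By \Cref{gen}, it then suffices to show that for every $(k,\lambda)\in\mathcal F_s(\Gamma)$ some integer $x$ gives $\CA_\Gamma(x,s,k,\lambda)<0$. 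By \Cref{rearrange} and the equivalence stated after it (with $a=v-s>0$; the case $s=v$ is trivial or checked directly), this is equivalent to $\CE_\Gamma(s,k,\lambda)>0$.

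Now fix a feasible pair $(k,\lambda)$ and move in two stages from $(k^*_s,\lambda^*_s)$ to it.

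\textbf{First stage.} Hold $k=k^*_s$ and lower $\lambda^*_s$ to $\lambda$. By part (1) of \Cref{props}, $\CE_\Gamma$ is non-increasing as $\lambda$ increases, so it is non-decreasing as $\lambda$ decreases. Hence
\[
\CE_\Gamma(s,k^*_s,\lambda)\ge \CE_\Gamma(s,k^*_s,\lambda^*_s)>0.
\]

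\textbf{Second stage.} Hold $\lambda$ fixed and raise $k^*_s$ to $k$. Since $k\ge k^*_s\ge s-1$, the whole segment lies in the range $k\ge s-1$ where part (2) of \Cref{props} applies. That part says $\CE_\Gamma$ does not increase as $k$ decreases, so it does not decrease as $k$ increases. Hence
\[
\CE_\Gamma(s,k,\lambda)\ge\CE_\Gamma(s,k^*_s,\lambda)>0.
\]
\Cref{rearrange} then rules out an $s$-clique with parameters $(k,\lambda)$. Since the pair was arbitrary, \Cref{gen} rules out every $s$-clique.

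The main obstacle is the second stage. It needs the condition $k\ge s-1$ along the entire interval $[k^*_s,k]$, which is exactly why the corollary requires $k^*_s\ge s-1$. I would also check that the piecewise monotonicity argument for $f(b)$ holds across the breakpoints of the floor and absolute-value terms, since $b$ depends linearly on $k$ with a negative coefficient. Non-increasing in $b$ on each piece together with continuity of $f$ at the breakpoints gives global monotonicity. The first stage is straightforward, because $\lambda$ enters only through $c$, and linearly.
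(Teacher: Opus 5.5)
Your proposal is correct and follows the paper's own argument. The paper likewise uses the two monotonicity statements of \Cref{props} to carry $\CE_\Gamma(s,k^*_s,\lambda^*_s)>0$ from the corner point to the whole region $\{k\ge k^*_s,\ \lambda\le\lambda^*_s\}$ (the black shaded region in \Cref{feasible region}) and then rules out $s$-cliques via \Cref{rearrange} and \Cref{gen}; your version only makes the admissible set $\mathcal F_s(\Gamma)$ and the two-stage path explicit, and rightly notes that $k^*_s\ge s-1$ keeps the $k$-segment in the range where part (2) of \Cref{props} applies.
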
 

This is best understood visually, so we refer to \Cref{feasible region}. Recall that we associate the variable $k$ with feasible values of $\langle k \rangle_S$, and the variable $\lambda$ with feasible values of $\langle \lambda \rangle_S$. The blue shaded region represents all feasible pairs $(k,\lambda)$ corresponding to a potential $s$-clique in a graph $\Gamma$.

If there exists a point $(k_s^*, \lambda_s^*)$ in the red shaded region such that $\mathrm{CE}_\Gamma(s, k_s^*, \lambda_s^*) > 0$, then by  \Cref{props}, it follows that $\mathrm{CE}_\Gamma(s, k, \lambda) > 0$ for all $(k,\lambda)$ in the black shaded region.

\begin{figure}[!ht]
\begin{minipage}{0.48\textwidth}
\begin{tikzpicture}[>=stealth, thick, scale=0.8]

\fill[blue!30] (1,0.2) rectangle (4.5,3);

\draw[->] (-2,0) -- (4.5,0) node[below right] {$k$};
\draw[->] (-2,0) -- (-2,4.5) node[left] {$\lambda$};

\fill (-2,0) circle (2pt) node[below left] {$0$};

\foreach \x in {-1,0,1,2,3,4}
  \draw (\x,0.1) -- (\x,-0.1) node[below] {};

\foreach \y in {1,2,3,4}
  \draw (-1.9,\y) -- (-2.1,\y) node[left] {};

\node[blue!70!black] at (2.7,1.8) {\footnotesize Feasible region for};
\node[blue!70!black] at (2.7,1.3) {\footnotesize a potential $s$-clique};
\draw[white] (-1, -0.4) -- (-1,0) node[below,text =red!70!black] {$s-1$};
\draw[red!70!black] (-1, -0.1) -- (-1,0.1);
\end{tikzpicture}
\end{minipage}%
\begin{minipage}{0.48\textwidth}
\begin{tikzpicture}[>=stealth, thick, scale=0.8]

\fill[blue!30] (1,0.2) rectangle (4.5,3);

\draw[->] (-2,0) -- (4.5,0) node[below right] {$k$};
\draw[->] (-2,0) -- (-2,4.5) node[left] {$\lambda$};

\fill (-2,0) circle (2pt) node[below left] {$0$};

\foreach \x in {-1,0,1,2,3,4}
  \draw (\x,0.1) -- (\x,-0.1) node[below] {};

\foreach \y in {1,2,3,4}
  \draw (-1.9,\y) -- (-2.1,\y) node[left] {};

\node[blue!70!black] at (2.7,1.8) {};
\fill[red!30] (1,3) rectangle (-1,4.5);
\fill[fill=red!40, draw] (1,3) circle (2pt) node[below left] {};
\draw[white] (-1, -0.4) -- (-1,0) node[below,text =red!70!black] {$s-1$};
\draw[red!70!black] (-1, -0.1) -- (-1,0.1);
\fill[pattern=north east lines, pattern color=black] (0.5,3.5) rectangle (4.5,0.2);
\fill[fill=black, draw] (0.5,3.5) circle (1pt) node[above left] {\footnotesize ($k^*_s, \lambda^*_s$)};
\end{tikzpicture}

\end{minipage}%

\caption{Visual representation of \Cref{visual}.}
\label{feasible region}
\end{figure}

\section{Bounding feasible regions for $(k,\lambda)$}
In this section we will outline methods for bounding $(k,\lambda)$ in different types of graphs, along with some sporadic examples of how to utilise these methods to produce a bound with the CAP.

\subsection{Methods for bounding $\lambda$ in association scheme graphs} 

Since degree valency of graphs of association scheme is always constant, we have a natural bound $k_s^*$ on variable $k$ for all $s$, however, we are required to explore ways to produce an upper bound $\lambda^*_s$ on the variable $\lambda$ (average edge-degree of a potential $s$-clique). The main tool we use to do this will be Delsarte's inner distribution vector. 

    Consider $S$ such that $|S|=s$, we can calculate its inner distribution vector $a$ in some $\Gamma_X$. We have the property \begin{align} s a_i = |(S \times S) \cap R_i|, \label{innerdid} \end{align} for $R_i \in \mathcal{R}$.

    For a non-trivial relation $R_i$, the right-hand side of (\ref{innerdid}) counts each edge in $S$ that belongs to $R_i$ twice, once for each ordering of the endpoints. Let us denote $\hat{a}_i$ as the number of edges in $S$ that belong to the relation $R_i$. We have the following relationship: \begin{align} \frac{s}{2} a_i = \hat{a}_i. \label{edgeconfig} \end{align} We can rescale a given inner distribution vector using this relationship. This allows us to draw some information regarding the configuration of the edges the inner distribution vector represents. We will denote this scaled vector $\hat{a}$. Now also consider an \textit{edge-degree vector} $\mathcal{D}$ for $\Gamma_X$,  such that $\mathcal{D} := (0, \lambda_1, \dots, \lambda_n)$ where $\lambda_i$ is the edge-degree of an edge from $R_i$.\footnote{The edge-degree for an $i$-coloured edge in $\Gamma_X$ can be calculated using the intersection parameters of the scheme}

    The edge-degree vector, along with the scaled inner distribution vector, allows us to observe the following:
	\begin{lemma}
		\label{eds}
		For a given scaled inner distribution vector $\hat{a}$ of $S$, and edge-degree vector $\mathcal{D}$ of $ \Gamma_X$, the value $\hat{a}^\top \mathcal{D}$ sums the total edge-degrees of all the edges of $\Gamma_X$ in the subgraph induced by $S$.
	\end{lemma}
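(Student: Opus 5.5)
The plan is a double count over the edges of $\Gamma_X$ inside $S$. By (\ref{edgeconfig}), for each non-trivial relation $R_i$ the entry $\hat a_i=\frac{s}{2}a_i$ equals the number of unordered pairs $\{u,w\}\subseteq S$ with $(u,w)\in R_i$. The first step is to justify this. I will use that every relation in a (symmetric) association scheme is symmetric, so each such unordered pair contributes exactly the two ordered pairs $(u,w)$ and $(w,u)$ to $(S\times S)\cap R_i$. Hence $|(S\times S)\cap R_i| = 2\hat a_i$, which is exactly the identity $s a_i = 2\hat a_i$.

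Next I will use that the edge-degree is constant on each class. For a pair $(u,w)\in R_i$, the number of common neighbours of $u$ and $w$ in $\Gamma_X$ is
\[
\sum_{R_j,R_l\in X} p^i_{jl},
\]
where the $p^i_{jl}$ are the intersection numbers of the scheme. This depends only on $i$ and not on the particular pair, and it is the quantity $\lambda_i$ recorded in $\mathcal D$.

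With these two facts in hand, the computation is direct. Grouping the edges of $\Gamma_X$ inside $S$ by relation gives
\[
\sum_{\{u,w\}\in E(\Gamma_X[S])}|\Gamma_X(u)\cap\Gamma_X(w)| = \sum_{R_i\in X}\hat a_i\lambda_i .
\]
It remains to match this with $\hat a^\top\mathcal D=\sum_{i=0}^n \hat a_i\mathcal D_i$ and to check that the extra terms vanish:
\begin{itemize}
\item The $i=0$ term is zero because $\mathcal D_0=0$. This covers the diagonal pairs, which are not edges.
\item For $R_i\notin X$ the pairs in $R_i$ are not edges of $\Gamma_X$. For these terms I should either take $\lambda_i$ to be $0$ in $\mathcal D$, or rely on the LP constraint $a_i=0$, which forces $\hat a_i=0$ for a clique $S$.
\end{itemize}

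The only real obstacle is this bookkeeping for the non-edge relations $R_i\notin X$, together with the symmetry of the relations. The remaining steps are routine.
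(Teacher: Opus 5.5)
Your argument is correct and matches the paper's approach: the paper gives no separate proof and treats the lemma as immediate from the definitions, namely that $\hat a_i$ counts the unordered pairs of $S$ lying in $R_i$ and $\mathcal D_i$ is their common edge-degree, so grouping the edges of $\Gamma_X$ inside $S$ by relation gives $\hat a^\top\mathcal D$. Of your two options for $R_i\notin X$, the first (setting $\mathcal D_i=0$) is the one the lemma needs for an arbitrary $S$, whereas the second only covers cliques; the first also matches the paper's convention, as the entries of $\mathcal D$ for $R_3,R_4$ vanish in the Hamming example.
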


	\Cref{eds} provides a direct method to compute the average edge-degree of an $s$-clique induced by $S$ from its inner distribution vector and intersection numbers of the scheme. Recall, an $s$-clique has $\frac{s(s-1)}{2}$ edges, so we must have $$\langle \lambda \rangle_S = \frac{2 \hat{a}^\top \mathcal{D}}{s(s-1)}.$$

Using the framework from Delsarte's linear program, a new integer linear program can be created, based on the feasibility of the scaled inner distribution vector of a potential $s$-clique $S$ to obtain an upper bound $\lambda^*_s$ on its average edge-degree $\lambda$.  
If the integrality constraints are relaxed to \(\hat a_i\ge 0\), one obtains a linear programming relaxation.
		\begin{align*}
		\textsc{maximise: } & \frac{2 \hat{a}^\top \mathcal{D}}{s(s-1)}, \tag{NEWLP}\\
		\textsc{subject to: } & (\hat{a} Q)_j \ge 0 \textnormal{ for all entries } j, \\
		& \hat{a}_0 = s/2, \\
		& \hat{a}_i = 0 \textnormal{ for } R_i \notin X, \\
		& \sum_{i=1}^n \hat{a}_i = \frac{s(s-1)}{2}, \\
		& \hat{a}_i \in \mathbb{N} \textnormal{ for all } i \ge 1.
	\end{align*}

\begin{rexample}[A graph from the Hamming scheme]
Using the AssociationScheme package in GAP \cite{AssociationSchemes}, we can produce the $H(7,3)$ Hamming scheme. Taking $X = \{R_1,R_2,R_5,R_6,R_7\}$, the graph $\Gamma_X$ is a $(2187,1346)$ regular graph with $\mathcal{D} = (0,1045, 995, 0,0, 815, 775, 715)$. Also from GAP, we can obtain the second eigenmatrix of the scheme. \[ Q=\scriptsize{
\begin{bmatrix}
1 & 14 & 84 & 280 & 560 & 672 & 448 & 128 \\
1 & 11 & 48 & 100 & 80 & -48 & -128 & -64 \\
1 & 8 & 21 & 10 & -40 & -48 & 16 & 32 \\
1 & 5 & 3 & -17 & -16 & 24 & 16 & -16 \\
1 & 2 & -6 & -8 & 17 & 6 & -20 & 8 \\
1 & -1 & -6 & 10 & 5 & -21 & 16 & -4 \\
1 & -4 & 3 & 10 & -25 & 24 & -11 & 2 \\
1 & -7 & 21 & -35 & 35 & -21 & 7 & -1
\end{bmatrix}.}\]
  The Delsarte bound for this graph is $\omega(\Gamma_X) \le 81$, so considering a potential $81$-clique we can evaluate our new linear program to get an upper bound on the average edge-degree of $\lambda^*_s = 833.75$. Now evaluating $\CE_{\Gamma_X}(81, 1346, 833.75) \approx 2.7 \times 10^6 > 0$. By \Cref{visual} a clique of size $81$ cannot exist.
\end{rexample}

Aside from producing an upper bound on $\lambda$, note that what was the objective of Delsarte's program has now become a constraint. So, similar to Delsarte's LP bound, this linear program can produce a bound on $\omega(\Gamma_X)$, however in a slightly different way. Evidently we have:

\begin{lemma}
    For association scheme graph $\Gamma_X$, if the linear program \textnormal{NEWLP} is infeasible for a certain value of $s$, then $\omega(\Gamma) \le s-1$.
\end{lemma}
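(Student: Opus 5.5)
We argue by contraposition: if $\Gamma_X$ contains an $s$-clique $S$, then its scaled inner distribution vector $\hat a$ is a feasible point of \textnormal{NEWLP}. So infeasibility for this $s$ rules out $s$-cliques. Since a clique contains cliques of every smaller size, it then rules out all cliques of size at least $s$, giving $\omega(\Gamma_X)\le s-1$.

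Concretely, take an $s$-clique $S$ and let $a$ be its inner distribution vector, so $a_i=|(S\times S)\cap R_i|/s$. Set $\hat a=\frac{s}{2}a$ as in (\ref{edgeconfig}), and check the constraints one by one.
\begin{itemize}
\item \emph{Diagonal.} $(S\times S)\cap R_0$ is the diagonal of size $s$, so $a_0=1$ and $\hat a_0=s/2$.
\item \emph{Non-edges.} Every pair of distinct vertices of $S$ is adjacent in $\Gamma_X$, so its relation lies in $X$. Hence $a_i=0$, and so $\hat a_i=0$, for $R_i\notin X$.
\item \emph{Integrality.} For $i\ge1$, $\hat a_i$ is the number of unordered pairs in $S$ lying in $R_i$. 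This is a non-negative integer by the discussion preceding (\ref{edgeconfig}), using symmetry of the relations.
\item \emph{Edge count.} These pairs partition $\binom{S}{2}$, so $\sum_{i\ge1}\hat a_i=\binom{s}{2}=\frac{s(s-1)}{2}$.
\item \emph{Delsarte inequalities.} Delsarte's theorem gives $(aQ)_j\ge0$ for every non-empty subset of $V$; this is exactly the fact underpinning \textnormal{DELSARTE}. Scaling by $s/2>0$ gives $(\hat aQ)_j\ge0$.
\end{itemize}
The objective value $\langle\lambda\rangle_S$, computed via \Cref{eds}, is finite, so $\hat a$ is a feasible point and \textnormal{NEWLP} is feasible, contradicting the hypothesis.

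The only non-routine input is the positive semidefiniteness fact $(aQ)_j\ge0$. We would cite it from \cite{delsarte}, as it is the same fact that makes the DELSARTE program a valid relaxation.

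One subtlety should be noted: the scheme relations must be symmetric for $\hat a_i$ to be an integer count of edges. This is implicit in the paper's use of undirected graphs $\Gamma_X$, so we would state it as a standing assumption.
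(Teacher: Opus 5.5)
Your proposal is correct and is exactly the argument the paper leaves implicit with ``Evidently'': the scaled inner distribution vector of any $s$-clique satisfies every constraint of \textnormal{NEWLP} (Delsarte's inequalities $aQ\ge 0$ survive scaling by $s/2$), and since any clique of size at least $s$ contains an $s$-clique, infeasibility forces $\omega(\Gamma_X)\le s-1$. Your remark that the relations must be symmetric is a fair clarification of the paper's setup, not a gap. The same goes for the point that the constraint $\hat a_i=0$ for $R_i\notin X$ is meant only for $i\ge 1$, per the paper's convention $a_0=1$.
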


\subsection{Bounding $\lambda$}
Here we provide some general methods to produce bounds for the variable $\lambda$ in graphs that do not arise from association schemes. For a graph $\Gamma$ we can define the edge-degree sequence of all its edges $\mathcal{D}_\Gamma := \lambda_1, \lambda_2, \dots \lambda_m$ where for $i<j$, $\lambda_i \le \lambda_j$ and $m = |E(\Gamma)|$. 
 	Since there is no underlying structure on the edges, we must index each edge individually. We call the interval $\left[\lambda_1, \lambda_m\right]$ the \emph{trivial bounds} for $\lambda$. These can be refined by noting that an $s$-clique contains $\binom{s}{2}$ edges. Therefore,
	
	\[
  \lambda \in 
  \left[
    \frac{\sum_{i=1}^{\binom{s}{2}} \lambda_i}{\binom{s}{2}}, \;
    \frac{\sum_{i=m-\binom{s}{2}+1}^{m} \lambda_i}{\binom{s}{2}}
  \right].
\]
	These bounds are straightforward to obtain, and in some cases they can be sharpened. For a given vertex $u \in V(\Gamma)$ of degree at least $s-1$, we define an \textit{$s$-restricted edge-degree sequence} $\mathcal{D}_s(u)$. This is the increasing sequence $\lambda_{1_u}, \dots, \lambda_{(s-1)_u}$ of the $(s-1)$ largest edge-degrees among edges incident with $u$. Such a sequence can be computed for every $u \in V(\Gamma)$.

	From these, we construct a descending \emph{nested $s$-restricted edge-degree sequence} using all the vertices of $\Gamma$: \[
  \mathcal{D}_s(u_1), \mathcal{D}_s(u_2), \dots, \mathcal{D}_s(u_v),
\]
where the vertices are ordered so that for $a < b$,
\[
  \sum \mathcal{D}_s(u_a) \ge \sum \mathcal{D}_s(u_b).
\]

	\begin{theorem}
	For a graph $\Gamma$, clique of size $s$ and nested $s$-restricted edge-degree sequence \[\mathcal{D}_s(u_1), \mathcal{D}_s(u_2), \dots, \mathcal{D}_s(u_v)\] we can produce an upper bound $\lambda^*_s$ on $\lambda$ such that \[\lambda_s^* = \frac{\sum_{i=1}^s \sum \mathcal{D}_s(u_i)}{s(s-1)}.\]
	\end{theorem}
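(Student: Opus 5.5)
The approach is a double-counting argument. Every edge of the clique is counted twice, once from each endpoint. Each endpoint's contribution is then dominated by that vertex's $(s-1)$ largest incident edge-degrees. Finally, the $s$ vertices of the clique are dominated by the $s$ vertices with the largest such sums.

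Let $S$ be an $s$-clique of $\Gamma$. For an edge $e=\{u,w\}$, write $\lambda(e)=|\Gamma(u)\cap\Gamma(w)|$. The quantity to bound is
\[
\langle\lambda\rangle_S=\binom{s}{2}^{-1}\sum_{\{u,w\}\in\binom{S}{2}}\lambda(\{u,w\}).
\]
Summing over ordered pairs gives
\[
\sum_{\{u,w\}\in\binom{S}{2}}\lambda(\{u,w\})=\frac12\sum_{u\in S}\;\sum_{w\in S\setminus\{u\}}\lambda(\{u,w\}),
\]
so that
\[
\langle\lambda\rangle_S=\frac{1}{s(s-1)}\sum_{u\in S}\sigma_S(u), \qquad \sigma_S(u):=\sum_{w\in S\setminus\{u\}}\lambda(\{u,w\}).
\]

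Fix $u\in S$. Since $S$ is a clique, $\deg(u)\ge s-1$, so $\mathcal{D}_s(u)$ is defined. The inner sum $\sigma_S(u)$ runs over exactly $s-1$ distinct edges incident with $u$. Any sum of $s-1$ distinct edge-degrees at $u$ is at most the sum of the $s-1$ largest, which gives $\sigma_S(u)\le\sum\mathcal{D}_s(u)$.

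Summing this over the $s$ distinct vertices of $S$ gives $\sum_{u\in S}\sigma_S(u)\le\sum_{u\in S}\sum\mathcal{D}_s(u)$. Because the vertices $u_1,\dots,u_v$ are ordered so that $\sum\mathcal{D}_s(u_i)$ is non-increasing, the sum over any $s$ distinct vertices is at most the sum over $u_1,\dots,u_s$. The definition of $\mathcal{D}_s(u)$ presupposes $\deg(u)\ge s-1$, so only vertices of degree at least $s-1$ appear in the nested sequence. This is harmless, since every vertex of $S$ qualifies. Dividing by $s(s-1)$ yields $\langle\lambda\rangle_S\le\lambda^*_s$. Consequently every feasible $\lambda$ arising from an $s$-clique lies below $\lambda^*_s$, which is what is needed to use it in \Cref{visual}.

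There is no real obstacle in this argument. The one point needing care is the factor of two: the double count produces the denominator $s(s-1)$ rather than $\binom{s}{2}$, and this must match the statement. The other point is that the top-$(s-1)$ domination is applied vertex by vertex, independently of which edges are shared between vertices. That is legitimate because it is only an upper bound and does not require consistency across vertices.
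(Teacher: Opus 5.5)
Your proof is correct and takes the same route as the paper: you bound each clique vertex's contribution by its $(s-1)$ largest incident edge-degrees, then dominate the $s$ clique vertices by the first $s$ terms of the nested sequence. You also make explicit two steps the paper's proof leaves implicit, namely the double-counting identity $s(s-1)\langle\lambda\rangle_S=\sum_{u\in S}\sigma_S(u)$ and the per-vertex bound $\sigma_S(u)\le\sum\mathcal{D}_s(u)$, and it is these that justify the denominator $s(s-1)$.
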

    
	\begin{proof}
	Consider the subsequence of $\mathcal{D}_s(\Gamma)$ of the form $\mathcal{D}_s(u_{1}), \dots, \mathcal{D}_s(u_s)$.  
The set of vertices $\{u_{1}, \dots, u_s\}$ correspond to the subset of $s$ vertices whose $(s-1)$ largest edge-degrees together give the maximum possible total among all $s$-subsets of vertices. Thus, for any $s$-clique $C$, we have that 
    \[  
    \sum_{u\in C}\sum \mathcal{D}_s(u)  
    \le  
    \sum_{i=1}^s\sum \mathcal{D}_s(u_i).  
    \]
    
    This means we have the bound
\[
  \lambda \le 
  \frac{\sum_{i=1}^{s} \sum \mathcal{D}_s(u_i)}{s(s-1)},
\]
and take the right-hand side to be $\lambda^*_s$.
	\end{proof}
    
	Sharpness of bounds for regular graphs often relies on bounding $\lambda$ as low as possible for each $s$. This task can be computationally heavy, however it is faster than searching for the largest clique as vertex number increases. The methods above are particularly effective at producing sporadic examples where the generalised CAP improves on the Hoffman bound, or for single-graph analysis.

\subsection{Bounding $k$}

Similar to the edge-degree sequence, we can consider a \textit{degree sequence} to bound $k$.	For a graph $\Gamma$, the degree sequence $\mathcal{K}_\Gamma := k_1, k_2, \dots k_v$ where $k_i = |\Gamma(u_i)|$ for $u_i \in V(\Gamma)$. The sequence is given in ascending order such that for $i<j$, $k_i \le k_j$. For a potential $s$-clique, only the average of the largest $s$ elements in $\mathcal{K}_\Gamma$ needs to be considered: $$k \in \left[\frac{\sum_{i=1}^s k_i}{s}, \frac{\sum_{i=v-s+1}^v k_i}{s}\right].$$ In this case $\left[\delta_\Gamma, \Delta_\Gamma\right]$ is the trivial bounds for $k$. Consistent with our earlier findings, these processes are useful at producing sporadic results for when the CAP bound outperforms the Haemers bound.

\begin{rexample}[A random regular graph]
We can use methods discussed to investigate the clique number of a random regular graph. Take $\Gamma$ to be a $(100,25)$ regular graph. 
We generated this graph using the NetworkX random regular graph function with seed 600. The Hoffman bound for this graph is $11$. We have a natural bound $k^*_s = 25$ and Hoffman bound $\lambda^*_s=11$ for all $s$. The clique existence polynomial gives $\CE_\Gamma(12, 25, 11) = 9856 > 0$. By \Cref{visual}, this CAP bound is the same as the Hoffman bound, however more work can be done to produce a tighter bound on $\lambda$.
\\
\\
Assuming a potential clique of size $9$, we can calculate $\sum_{i=1}^{9}\sum \mathcal{D}_9(u_i) = 624$ which gives a bound of $\lambda_9^* = 8.\bar6$. The clique existence polynomial gives $\CE_\Gamma(9, 25,8.\bar6) = 1456 > 0$. The CAP bound now gives $ \omega(\Gamma) \le 8$. This bound is reasonable and in fact $\omega(\Gamma)= 6$. Looking at \Cref{finalbound} we can see this information graphically. 
\end{rexample}

\begin{figure}[!ht]

\centering

\begin{tikzpicture}

\begin{axis}[
    xlabel={$s$},
    ylabel={$\mathrm{CE}_\Gamma(s)$},
    xtick={8,9,10,11,12,13},
    domain=7:13,
    samples at={8,9,10,11,12},
    width=10cm,
    height=6cm,
    yticklabel style={/pgf/number format/fixed},
    extra y ticks={0},
    extra y tick labels={0},
    extra y tick style={grid=major},
    axis lines=middle,
    grid=both,
    set layers,
    layers/axis on top/.style={axis on top},
    unbounded coords=jump,
    thick,
    legend style={
        at={(0.98,0.98)},
        anchor=north east
    }
]

\addplot+[
    blue,
    only marks,
    mark=*,
    mark size=2.5pt
]
{ ((100 - 53*x + 2*x^2)^2)
  - 4*(100 - x)*(-13*x + 14*x^2 - x^3)
  - ((100 - x)*(1 - abs(-1 + 2*((-100 + 53*x - 2*x^2)/(2*(100 - x))
     - floor((-100 + 53*x - 2*x^2)/(2*(100 - x)))))))^2
};

\addlegendentry{$\lambda=13$}

\addplot+[
    red,
    only marks,
    mark=*,
    mark size=2.5pt
]
{ ((100 - 53*x + 2*x^2)^2)
  - 4*(100 - x)*(-(10.6666666667*x) + 11.6666666667*x^2 - x^3)
  - ((100 - x)*(1 - abs(-1 + 2*((-100 + 53*x - 2*x^2)/(2*(100 - x))
     - floor((-100 + 53*x - 2*x^2)/(2*(100 - x)))))))^2
};

\addlegendentry{$\lambda=8.\bar6$}

\end{axis}

\end{tikzpicture}

\caption{$\mathrm{CE}_\Gamma(s, 25, \lambda)$ for $\lambda = 13, 8.\bar6$}
\label{finalbound}

\end{figure}

\section{Infinite families}


\subsection{Edge-regular}

 First, consider some infinite family of $(v,k,\lambda)$ edge-regular graphs $\mathcal{U}$, such that there is an upper bound $N$ on $\lambda$ for all $\Gamma \in \mathcal{U}$. In this instance, we see that
        \[
        \CA_\Gamma(0,N+3)=(N+3)(N+2)(\lambda-N-1)\le -(N+3)(N+2)< 0,
        \]
for all $\Gamma \in \mathcal U$. We get the following lemma.

\begin{lemma}
\label{trivial}
For an infinite family of $(v,k,\lambda)$ edge-regular graphs $\mathcal{U}$, if there exists an upper bound $N$ on $\lambda$ for all $\Gamma \in \mathcal{U}$ then the CAP proves that $\omega(\Gamma) \le N+2$ for all $\Gamma \in \mathcal{U}$.  
\end{lemma}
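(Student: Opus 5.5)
The plan is to evaluate the CAP at the single integer $x=0$ and apply it to clique size $s=N+3$. Setting $x=0$ in Soicher's form
\[
\CA_\Gamma(x,s)=x(x+1)(v-s)-2xs(k-s+1)+s(s-1)(\lambda-s+2)
\]
kills the first two terms. This leaves $\CA_\Gamma(0,s)=s(s-1)(\lambda-s+2)$, which no longer depends on $v$ or $k$. Taking $s=N+3$ gives
\[
\CA_\Gamma(0,N+3)=(N+3)(N+2)(\lambda-N-1).
\]
Since $\lambda\le N$ for every $\Gamma\in\mathcal U$, the factor $\lambda-N-1$ is at most $-1$. So the value is at most $-(N+3)(N+2)<0$; the product is strictly positive because $N\ge 0$, as $\lambda$ is a count.

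There is one bookkeeping issue before invoking the contrapositive of \Cref{BIP}, equivalently \Cref{gen} with the singleton admissible set $\mathcal F_s(\Gamma)=\{(k,\lambda)\}$. The CAP is only meaningful for $s\le v$ with $S$ a genuine set of $s$ vertices, so if $v<N+3$ the claim $\omega(\Gamma)\le N+2$ is trivial anyway. Otherwise the existence of the integer $x=0$ with $\CA_\Gamma(0,N+3)<0$ shows that no $(N+3)$-clique exists.

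It remains to pass from "no $(N+3)$-clique" to $\omega(\Gamma)\le N+2$. This follows because any larger clique contains an $(N+3)$-subclique. Alternatively, the same computation with $x=0$ works for every $s\ge N+3$, since then $\lambda-s+2\le N-s+2<0$ and $s(s-1)>0$. Either route gives the bound uniformly over all $\Gamma\in\mathcal U$.

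There is no real obstacle here. The only care needed is confirming that, for an edge-regular graph, \Cref{tab:gen1} gives exactly $\lambda_2=\lambda-s+2$, and that \Cref{BIP} requires nonnegativity at every integer $x$, including $x=0$.
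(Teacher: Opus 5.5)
Your proposal is correct and follows the paper's argument: evaluate the CAP at $x=0$ with $s=N+3$ to get $(N+3)(N+2)(\lambda-N-1)\le -(N+3)(N+2)<0$, which rules out an $(N+3)$-clique. Your added bookkeeping (the trivial case $v<N+3$, and passing from no $(N+3)$-clique to $\omega(\Gamma)\le N+2$ via subcliques) just makes explicit what the paper leaves implicit.
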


While this bound is trivial (and almost never sharp for the CAP), for some families of graphs, spectral bounds cannot always reproduce these trivialities. We will consider two infinite families of edge-regular graphs with this property in this section.

For odd $q$, consider the group action of $\textnormal{PGL}(2, q)$, as a subgroup of $\textnormal{PGL}(3, q)$, on the secant lines of a non-degenerate conic  $\mathcal{O}$ in $\textnormal{PG}(2,q)$. This action is generously transitive. We define $\textnormal{PGL}(2,q)^+$ as the Schurian association scheme generated by this action, where relations are given by its orbital graphs. One of the orbital graphs in particular we focus on, we refer to as $\Gamma_{q^+}$, where $\{a,b\} \in E(\Gamma_{q^+})$ if and only if $a$ and $b$ are polar conjugates with respect to the polarity induced by $\mathcal{O}$. Since $\Gamma_{q^+}$ is a graph of a single relation in an association scheme, it must be edge-regular.

Evidently, we can also see that $\Gamma_{q^+}$ is isomorphic to an induced subgraph $\mathcal{H}$ of the Erd\H{o}s–R\'enyi polarity graph\footnote{known to have minimum eigenvalue of $-\sqrt{q}$ \cite{erpg}} ($\textnormal{ER}_q$), where $V(\mathcal{H})$ is the external points of $\mathcal{O}$, corresponding to the secant lines under the polarity mapping with respect to $\mathcal{O}$.

Since $\Gamma_{q^+}$ is isomorphic to $\mathcal{H}$, by interlacing of eigenvalues we can retrieve the following:
	\begin{corollary}
		\label{minev}
		The minimum eigenvalue of $\Gamma_{q^+}$ must be greater than or equal to the minimum eigenvalue of $\textnormal{ER}_q$, $-\sqrt{q}$.
	\end{corollary}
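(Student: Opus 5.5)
The plan is to combine Cauchy interlacing with the identification of $\Gamma_{q^+}$ as an induced subgraph $\mathcal{H}$ of $\textnormal{ER}_q$. Write $A$ for the adjacency matrix of $\textnormal{ER}_q$; it is a real symmetric matrix of order $q^2+q+1$. Order the vertices of $\textnormal{ER}_q$ so that the external points of $\mathcal{O}$ come first. The adjacency matrix $B$ of $\mathcal{H}$ is then the leading principal submatrix of $A$ indexed by those external points.

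There is one subtlety to settle first. In $\textnormal{ER}_q$, absolute points carry loops, because a point lies on its own polar line. If the convention keeps these loops as $1$'s on the diagonal, then $B$ is still an honest graph adjacency matrix: the external points are not absolute, since the absolute points are exactly the points of $\mathcal{O}$. The diagonal of $B$ is therefore zero, and $B$ coincides with the adjacency matrix of $\mathcal{H}$. Adjacency in $\mathcal{H}$ is conjugacy under the polarity of $\mathcal{O}$, which is exactly the relation defining $\Gamma_{q^+}$ after we identify secant lines with their poles. Hence $B$ is permutation-similar to the adjacency matrix of $\Gamma_{q^+}$, so the two have the same spectrum.

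Next, apply Cauchy interlacing for principal submatrices of symmetric matrices. If $A$ has eigenvalues $\theta_1\ge\dots\ge\theta_n$ and $B$ is an $m\times m$ principal submatrix with eigenvalues $\eta_1\ge\dots\ge\eta_m$, then $\theta_{i}\ge \eta_i\ge \theta_{n-m+i}$. Taking $i=m$ gives $\eta_m\ge\theta_n$, so the least eigenvalue of $B$ is at least the least eigenvalue of $A$. Equivalently, one can use the Rayleigh quotient directly: $\eta_m=\min_{y\neq 0} y^\top By/y^\top y$, and extending $y$ by zeros to a vector of length $n$ shows that this minimum is at least $\theta_n$.

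Finally, we invoke the cited fact that the minimum eigenvalue of $\textnormal{ER}_q$ equals $-\sqrt q$; this follows from $A^2=J+qI$ together with the loop-trace accounting. Combining the steps gives $\gamma_{\min}(\Gamma_{q^+})\ge -\sqrt q$.

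The only genuine obstacle is the first step: checking that the loop convention for $\textnormal{ER}_q$, which is the one under which $-\sqrt q$ is its minimum eigenvalue, agrees with the $0$-diagonal submatrix we use. As argued, this holds because no external point is absolute, so interlacing applies to exactly the adjacency matrix of $\Gamma_{q^+}$.
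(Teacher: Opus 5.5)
Your proposal is correct and follows the same route as the paper, which simply notes that $\Gamma_{q^+}$ is isomorphic to the induced subgraph $\mathcal{H}$ of $\textnormal{ER}_q$ on the external points and then applies eigenvalue interlacing. Your extra check is a worthwhile detail the paper leaves implicit: since no external point is absolute, the relevant principal submatrix of the looped adjacency matrix (whose least eigenvalue is exactly $-\sqrt q$) is precisely the loopless adjacency matrix of $\Gamma_{q^+}$.
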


For a visual understanding of \Cref{minev}, we generated the eigenvalues of $\Gamma_{q^+}$ for the odd prime powers from $7$ to $89$ using GAP. It is indeed true that these values are always greater than or equal to $-\sqrt{q}$, however in fact, they seem generally very close to $-\sqrt{q}$. See the plot in \Cref{plus} along with the lower bound of $-\sqrt{q}$. 

\begin{figure}[!ht]
	\captionsetup{width=.6\linewidth}
	\begin{center}
	\begin{tikzpicture}[scale=1.2]
	
    \begin{axis}[
        xlabel={$q$},
        ylabel={Minimum Eigenvalue},
        grid=major,
        legend pos=north east,
        legend style={font=\small}
    ]
    \addplot coordinates {
		(7.0, -2.4150390625)
 		(9.0, -2.0)
        (11.0, -3.2421875)
        (13.0, -3.0)
        (17.0, -4.0)
        (19.0, -4.322265625)
        (23.0, -4.76953125)
        (29.0, -5.0)
        (31.0, -5.5732421875)
        (37.0, -6.07177734375)
        (41.0, -6.3916015625)
        (43.0, -6.5)
        (47.0, -6.7255859375)
		(49, -224317/32768)
        (53.0, -7.203125)
        (59.0, -7.685546875)
        (61.0, -7.735595703125)
        (67.0, -8.17431640625)
        (71.0, -8.404296875)
        (73.0, -8.438232421875)
        (79.0, -8.89501953125)
		(81,-9)
        (83.0, -9.076171875)
        (89.0, -9.44140625)
    };
    \addlegendentry{Data Points}

    \addplot[domain=7:90, samples=90, smooth, red, thick] { -sqrt(x) };
    \addlegendentry{$f(q) = -\sqrt{q}$}

    \end{axis}
\end{tikzpicture}

\end{center}
\caption{Plot displaying the minimum eigenvalue of $\Gamma_{q^+}$ for the odd prime powers from $7$ to $89$.}
	\label{plus}
\end{figure}

First we provide the following lemma:
\begin{lemma}
		\label{edge}
	A pair of vertices in $\textnormal{ER}_q$ cannot have two or more common neighbours. In particular, this implies for any value of $s$ and $q$, we have an upper bound of $1$ on the edge-degree of any edge.
	\end{lemma}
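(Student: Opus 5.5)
Our plan is to use the incidence structure of $\textnormal{PG}(2,q)$ directly. Recall the standard model of $\textnormal{ER}_q$: its vertices are the points of $\textnormal{PG}(2,q)$. Fix the polarity $\perp$ induced by $\mathcal{O}$. Two distinct points $a,b$ are adjacent if and only if $a \in b^\perp$, which is symmetric because $\perp$ is a polarity: $a\in b^\perp \iff b \in a^\perp$. Thus the neighbourhood of a vertex $a$ is the set of points on the line $a^\perp$, excluding $a$ itself if $a$ is absolute, that is, if $a\in a^\perp$. Loops are discarded, and this does not affect the argument.

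The key step is the following. Suppose distinct vertices $a,b$ have two distinct common neighbours $c,d$. Then $c,d \in a^\perp \cap b^\perp$. Since $\perp$ is a bijection from points to lines, $a\neq b$ gives $a^\perp \neq b^\perp$. Two distinct lines of a projective plane meet in exactly one point, so $|a^\perp\cap b^\perp|=1$. This contradicts $c\neq d$. Hence $|\textnormal{ER}_q(a)\cap \textnormal{ER}_q(b)|\le 1$ for every pair of distinct vertices, whether or not they are adjacent.

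For the second sentence, the edge-degree of an edge $\{a,b\}$ is by definition $|\Gamma(a)\cap\Gamma(b)|$, so the bound of $1$ is immediate for $\textnormal{ER}_q$. It also holds for the induced subgraph $\mathcal{H}\cong\Gamma_{q^+}$, because passing to an induced subgraph can only shrink common neighbourhoods. The bound depends on neither $s$ nor $q$. Consequently, for any potential $s$-clique $S$ we get $\langle\lambda\rangle_S\le 1$, and we may take $\lambda^*_s=1$ in \Cref{visual}; equivalently, $N=1$ in \Cref{trivial}.

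The only point needing care is the precise definition of $\textnormal{ER}_q$, in particular how absolute points and loops are treated, and checking that the adjacency in $\Gamma_{q^+}$ (polar conjugacy of secant lines) transfers to adjacency in $\textnormal{ER}_q$ on external points under $\perp$. Once the identification $\textnormal{ER}_q(a)\subseteq a^\perp$ is fixed, the rest is the single projective-plane axiom that two lines meet in one point.
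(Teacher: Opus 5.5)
Your proposal is correct and is essentially the paper's argument. The paper phrases it as ``two common neighbours $a,b$ lie on both $p^\perp$ and $p'^\perp$, and two points determine a unique line, so $p^\perp=p'^\perp$, contradicting that $\perp$ is a bijection''; you use the equivalent form ``$a^\perp\neq b^\perp$ by bijectivity, and distinct lines meet in exactly one point.'' Your extra remarks about absolute points and about passing to the induced subgraph $\mathcal{H}$ are harmless additions the paper leaves implicit.
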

	\begin{proof}
	Recall, an edge in $\textnormal{ER}_q$ is between two vertices if they are polar conjugates. This means that a pair of vertices have a common neighbour in $\textnormal{ER}_q$ if they have a common polar conjugate point. Consider vertices $p$, $p'$. Assume they have two common polar conjugate points $a$, $b$. Then $a$ and $b$ will lie not only on $p^\perp$ but also on $p'^\perp$. Since two points define a unique line, we have that $p^\perp = p'^\perp$. This is a contradiction since $\perp$ is a bijection. This means such an $a$ and $b$ cannot exist, and therefore a pair of vertices in $\textnormal{ER}_q$ cannot have two or more common neighbours.
	\end{proof}

This provides a bound of $\lambda = 1$ for all $\Gamma_{q^+}$.
These graphs were analysed by Parsons initially \cite{parse}, and in fact we can represent $(v,k)$ of $\Gamma_{q^+}$, all in terms of $q$. Specifically, $v = \frac{q(q+1)}{2}$ and $k = \frac{q-1}{2}$.

\begin{theorem}
		\label{edge_reg_1}
	The CAP bound improves on the Delsarte bound for $\Gamma_{q^+}$ for infinitely many $q$.
\end{theorem}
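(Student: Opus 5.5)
Since every edge of $\Gamma_{q^+}$ has edge-degree $\lambda\le 1$ by \Cref{edge}, and $\Gamma_{q^+}$ is edge-regular, \Cref{trivial} with $N=1$ applies. Concretely,
\[
\CA_{\Gamma_{q^+}}(0,4)=4\cdot 3\,(\lambda-2)\le -12<0 ,
\]
so the CAP proves $\omega(\Gamma_{q^+})\le 3$ for every odd $q$. If the precise value of $\lambda$ is $0$ for some $q$, the same computation at $s=3$ gives $6(\lambda-1)<0$ and hence an even smaller bound, so in all cases the CAP bound is at most $3$. The task therefore reduces to showing that the Delsarte bound is strictly larger than $3$ (after taking integer parts, at least $4$) for infinitely many $q$.

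For the Delsarte bound (\Cref{DB}) we have $\omega\le 1-k/\gamma_v$, with $k=\frac{q-1}{2}$ and $\gamma_v<0$ the minimum eigenvalue of $\Gamma_{q^+}$. \Cref{minev} gives $\gamma_v\ge -\sqrt q$, so $-1/\gamma_v\ge 1/\sqrt q$, and therefore
\[
1-\frac{k}{\gamma_v}\;\ge\; 1+\frac{q-1}{2\sqrt q}.
\]
Here the interlacing inequality points the right way: a smaller $|\gamma_v|$ only makes the Delsarte bound larger. Hence the Delsarte bound is at least $1+\frac{q-1}{2\sqrt q}$, which grows like $\sqrt q/2$. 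This exceeds $4$ as soon as $\frac{q-1}{2\sqrt q}\ge 3$, i.e.\ for $q\ge 43$ (check: $42/(2\sqrt{43})\approx 3.2$). For every odd prime power $q\ge 43$ the floor of the Delsarte bound is therefore at least $4>3$, and the CAP strictly improves on it. There are infinitely many such $q$, so the theorem follows; in fact the argument gives the stronger statement "for all sufficiently large odd $q$".

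The main point to verify is that \Cref{DB} legitimately applies. $\Gamma_{q^+}$ must be a single relation graph in the association scheme $\textnormal{PGL}(2,q)^+$, which the paper asserts, since the action is generously transitive and so the scheme is symmetric. We also need $\gamma_v<0$, which holds for any graph with at least one edge. A minor subtlety is whether the CAP at $s=4$ requires $s<v$ or some nondegeneracy condition; $v=q(q+1)/2$ is large, so this is harmless. If one wanted a sharper comparison, one could also evaluate the CAP at $s=3$ with $\lambda=1$, $x=1$: $\CA(1,3)=2(v-3)-6(k-2)+6\cdot 0$, which is positive for large $q$. So $s=4$ is likely where the CAP stops, but the plan does not require this.
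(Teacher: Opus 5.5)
Your proposal is correct and is essentially the paper's proof: \Cref{edge} and \Cref{trivial} (with $N=1$) give the CAP bound $\omega(\Gamma_{q^+})\le 3$, and \Cref{minev} makes the Delsarte bound at least $1+\frac{q-1}{2\sqrt q}$, which is unbounded. Your explicit threshold is a small addition to the paper's ``tends to infinity'' argument, though $q\ge 41$ already suffices (since $40/(2\sqrt{41})\approx 3.12\ge 3$), so your ``i.e.\ for $q\ge 43$'' is slightly conservative.
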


\begin{proof}
It follows directly from \Cref{trivial} and \Cref{edge} that the CAP proves $\omega(\Gamma_{q^+})\le 3$.  
     Since the Delsarte bound of $\Gamma_{q^+}$ grows like $\sqrt q$, it exceeds $3$ for infinitely many $q$.
     To see why, the Delsarte bound of $\Gamma_{q^+}$ can be written $\omega(\Gamma_{q^+})\le 1-\frac{k}{\gamma_v}$
     where $k=(q-1)/2$. Since $\gamma_v\ge -\sqrt{q}$, we have $|\gamma_v|\le \sqrt{q}$ and hence the Delsarte
     bound is at least \[1+\frac{q-1}{2\sqrt{q}}\] which tends to infinity.
	\end{proof}

For odd $q$, a similar result can be replicated when considering, the group action of $\textnormal{PGL}(2, q)$, as a subgroup of $\textnormal{PGL}(3, q)$, on the external lines of a non-degenerate conic  $\mathcal{O}$ in $\textnormal{PG}(2,q)$.

Again, one orbital graph, which we denote as $\Gamma_{q^-}$, contains an edge between two external lines if and only if they are polar conjugate. Similarly to $\Gamma_{q^+}$, we have that $\Gamma_{q^-}$ is isomorphic to an induced subgraph $\mathcal{N}$ of $\textnormal{ER}_q$, where $V(\mathcal{N})$ is the set of non-external, non-absolute points of $\mathcal{O}$, corresponding to the external lines under the polarity mapping with respect to $\mathcal{O}$. For $\Gamma_{q^-}$ we have $v = \frac{q(q-1)}{2}$ and $k= \frac{q+1}{2}$.

\begin{theorem}
	\label{edge_reg_2}
		The CAP bound improves on the Delsarte bound for $\Gamma_{q^-}$ for infinitely many $q$.
\end{theorem}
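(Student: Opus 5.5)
The plan mirrors the proof of \Cref{edge_reg_1}, with $\Gamma_{q^-}$ in place of $\Gamma_{q^+}$ and the induced subgraph $\mathcal{N}$ of $\textnormal{ER}_q$ in place of $\mathcal{H}$. There are three steps.

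\textbf{Step 1: an upper bound on $\lambda$.} Since $\Gamma_{q^-}$ is isomorphic to the induced subgraph $\mathcal{N}$ of $\textnormal{ER}_q$, any two adjacent vertices of $\Gamma_{q^-}$ have at most as many common neighbours in $\mathcal{N}$ as they do in $\textnormal{ER}_q$. By \Cref{edge} this number is at most $1$. Because $\Gamma_{q^-}$ is a single orbital graph of a generously transitive action, it is edge-regular, and its parameter $\lambda$ satisfies $\lambda\le 1$ for every odd $q$.

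\textbf{Step 2: the CAP bound.} Applying \Cref{trivial} with $N=1$ to the family $\{\Gamma_{q^-}: q \text{ odd}\}$ gives $\omega(\Gamma_{q^-})\le 3$. Concretely, $\CA_{\Gamma_{q^-}}(0,4)=12(\lambda-2)<0$.

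\textbf{Step 3: the Delsarte bound grows.} Since $\mathcal{N}$ is an induced subgraph of $\textnormal{ER}_q$, Cauchy interlacing shows that the minimum eigenvalue $\gamma_v$ of $\Gamma_{q^-}$ satisfies $\gamma_v\ge -\sqrt q$. This is the analogue of \Cref{minev}. With $k=(q+1)/2$, \Cref{DB} gives the Delsarte bound
\[
1-\frac{k}{\gamma_v}\ \ge\ 1+\frac{q+1}{2\sqrt q},
\]
which tends to infinity. It already exceeds $3$ once $q+1>4\sqrt q$, that is, for every odd prime power $q\ge 15$. For all these $q$ the Delsarte bound is at least $4$ while the CAP gives $3$, so the CAP strictly improves on it for infinitely many $q$.

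\textbf{Expected obstacle.} The only step that needs care is the spectral input. One must confirm that interlacing applies to the induced subgraph and yields a lower bound on the least eigenvalue, i.e.\ $\gamma_{\min}(\mathcal{N})\ge\gamma_{\min}(\textnormal{ER}_q)=-\sqrt q$. This requires $\textnormal{ER}_q$ to be taken without the loops at absolute points, which holds here because those points are excluded from $V(\mathcal{N})$. One must also check that $\gamma_v<0$, so that the Delsarte expression is meaningful. This follows because $\Gamma_{q^-}$ has at least one edge, so its adjacency matrix has trace $0$ and is nonzero, and hence has a negative eigenvalue.
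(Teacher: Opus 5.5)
Your proposal is correct and is the paper's own argument. The paper proves \Cref{edge_reg_2} by pointing back to the proof of \Cref{edge_reg_1}: $\lambda\le 1$ from \Cref{edge}, $\omega(\Gamma_{q^-})\le 3$ from \Cref{trivial}, and interlacing giving a Delsarte value of at least $1+\frac{q+1}{2\sqrt q}\to\infty$. Two small slips need fixing, but neither affects the conclusion.

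First, ``exceeds $3$'' does not imply ``is at least $4$''. Since $\omega$ is an integer, a Delsarte value in $(3,4)$ already yields $\omega\le 3$, so there is no improvement there. At $q=17$ your lower bound is only about $3.18$. The plotted least eigenvalue $-3.75$ gives an actual Delsarte value of $3.4$ there. To get $1+\frac{q+1}{2\sqrt q}\ge 4$ you need $q+1\ge 6\sqrt q$, which holds for every odd prime power $q\ge 37$. That is still infinitely many $q$.

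Second, your remark about loops is backwards. The value $-\sqrt q$ is the least eigenvalue of $\mathrm{ER}_q$ \emph{with} loops at the absolute points, where the adjacency matrix satisfies $A^2=J+qI$. Deleting the loops subtracts a positive semidefinite diagonal matrix, which can only lower eigenvalues, so the loopless $\mathrm{ER}_q$ need not have least eigenvalue $-\sqrt q$. What you actually use is this: $V(\mathcal N)$ contains no absolute points, so the principal submatrix of the looped matrix indexed by $V(\mathcal N)$ has zero diagonal. It is therefore exactly the adjacency matrix of $\mathcal N$, and Cauchy interlacing gives $\gamma_v\ge -\sqrt q$.
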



\begin{proof}
	 See proof of \Cref{edge_reg_1}.
\end{proof}

\begin{figure}[!ht]
	\captionsetup{width=.6\linewidth}
\begin{center}
\begin{tikzpicture}[scale=1.2]
	
    \begin{axis}[
        xlabel={$q$},
        ylabel={Minimum Eigenvalue},
        grid=major,
        legend pos=north east,
        legend style={font=\small}
    ]
    \addplot coordinates {
       (7.0, -2.0)
 (9.0, -3.0) (11, -3) (13, -3.5625) (17, -3.75) (19, -3.953125) (23, -4.78125)  (25.0, -5.0)
 (27.0, -4.6484375) (29, -5.171875) (31, -5.4453125) (37, -5.984375) (41, -6.32421875) (43, -6.4609375) (47, -6.75) (49, -7) (53, -7.234375) (59, -7.640625) (61, -7.796875) (67, -8) (71, -8.3671875) (73, -8.5234375) (79, -8.75390625) (81.0, -9.0) (83, -9.064453125) (89, -9.3515625)
    };
    \addlegendentry{Data Points}
    
    \addplot[domain=7:90, samples=100, smooth, red, thick] { -sqrt(x) };
    \addlegendentry{$f(q) = -\sqrt{q}$}
    
    \end{axis}
\end{tikzpicture}
\end{center}
\caption{Plot displaying the minimum eigenvalue of $\Gamma_{q^-}$ for the odd prime powers from $7$ to $89$.}
\label{mini}
\end{figure}

\subsection{Regular graphs}

 \begin{corollary}
	\label{method}
	For a connected $k$-regular graph $\Gamma$, consider the $v$ vertex graph $\overline{\Gamma \square C_n}$ for large enough $n$. We have that its minimum eigenvalue is given by: $$\gamma'_{v} = -1-2\cos\left(\frac{2\pi}{n}\right)-k.$$
	\end{corollary}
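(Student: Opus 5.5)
The plan is to compute the spectrum of $\Gamma\square C_n$ explicitly and then pass to the complement using regularity. Throughout, let $\Gamma$ have $m$ vertices and eigenvalues $k=\theta_1>\theta_2\ge\dots\ge\theta_m$. The strict inequality $\theta_1>\theta_2$ holds because $\Gamma$ is connected, so $k$ is a simple eigenvalue.

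\textbf{Spectrum of the product.} The adjacency matrix of the Cartesian product is $A(\Gamma)\otimes I_n+I_m\otimes A(C_n)$. The eigenvalues of $C_n$ are $2\cos(2\pi j/n)$ for $0\le j\le n-1$. Taking tensor products of eigenvectors therefore shows that $\Gamma\square C_n$ has eigenvalues $\theta_i+2\cos(2\pi j/n)$, with an orthogonal eigenbasis of the form $u_i\otimes w_j$. The graph $\Gamma\square C_n$ is $(k+2)$-regular on $v=mn$ vertices. Its largest eigenvalue $k+2$ comes from $i=1$, $j=0$, and the corresponding eigenvector is the all-ones vector $\mathbf 1=u_1\otimes w_0$. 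This eigenvalue is simple, since $\theta_1$ is simple in $\Gamma$ and $2$ is simple in $C_n$.

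\textbf{Passing to the complement.} For a regular graph with adjacency matrix $A$, the complement has adjacency matrix $J-I-A$. Every eigenvector of $A$ orthogonal to $\mathbf 1$ with eigenvalue $\lambda$ is therefore an eigenvector of $J-I-A$ with eigenvalue $-1-\lambda$. The vector $\mathbf 1$ itself has eigenvalue $v-1-(k+2)$. This last value is nonnegative and is certainly not the minimum once $n$ is large, because every other eigenvalue is at most $-1-\lambda$ with $\lambda\ge -k-2$ and $v$ grows with $n$. Hence
\[
\gamma'_v=-1-\lambda_2,
\]
where $\lambda_2$ is the largest eigenvalue of $\Gamma\square C_n$ other than the simple eigenvalue $k+2$.

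\textbf{Identifying $\lambda_2$.} This is the only real step. The candidates for $\lambda_2$ are $\theta_1+2\cos(2\pi/n)=k+2\cos(2\pi/n)$, coming from $i=1$, $j=1$, and $\theta_2+2$, coming from $i=2$, $j=0$. Every other eigenvalue is dominated by one of these two. Since $k-\theta_2>0$ by connectivity, and $2-2\cos(2\pi/n)\to 0$, for all sufficiently large $n$ we have $2-2\cos(2\pi/n)<k-\theta_2$. For such $n$,
\[
\lambda_2=k+2\cos\left(\frac{2\pi}{n}\right),
\]
and hence $\gamma'_v=-1-2\cos(2\pi/n)-k$. This also makes precise what ``large enough $n$'' means: $n$ must satisfy $2-2\cos(2\pi/n)<k-\theta_2$, together with the mild condition that the trivial eigenvalue $v-k-3$ is not the minimum.
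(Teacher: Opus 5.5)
Your proposal is correct and follows the paper's own argument: the second-largest eigenvalue of $\Gamma\square C_n$ is the larger of $k+2\cos(2\pi/n)$ and $\theta_2+2$. Connectivity gives $\theta_2<k$, so the former wins for large $n$, and the relation $J-I-A$ for regular graphs then gives $\gamma'_v=-1-k-2\cos(2\pi/n)$; your write-up is more careful than the paper's, since you make the threshold on $n$ explicit and check that the trivial eigenvalue $v-k-3$ of the complement cannot be the minimum.
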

    
	\begin{proof}
		Since $\overline{\Gamma \square C_n}$ is regular (with valency $k+2$) we can use its second largest eigenvalue to determine $\gamma'_{v}$. To find its second largest eigenvalue we will consider $\Gamma$ and $C_n$ separately. Let us use $r$ to denote its second largest eigenvalue. For $C_n$ we have $2,2\cos\left(\frac{2\pi}{n}\right)$ as its two largest eigenvalues. This means for $\Gamma \square C_n$, we have $k+2$ as our largest eigenvalue, and either $k+2\cos\left(\frac{2\pi}{n}\right)$ or $r+2$ as its second largest eigenvalue. It is clear that as $n \rightarrow \infty$, that $2\cos\left(\frac{2\pi}{n}\right) \rightarrow 2$. This means for large enough $n$, since $r \neq k$, the second largest eigenvalue of $\Gamma \square C_n$ is given as $k+2 \cos \left(\frac{2\pi}{n}\right)$. Let $n$ be large enough such that this is the case. We have \[\gamma'_{v} = -1-2\cos\left(\frac{2\pi}{n}\right)-k. \qedhere\]
	\end{proof}

    Now observe what happens as $n$ approaches infinity.

	\begin{corollary}
	\label{CAPHoff}
	For $\Gamma \square C_n$, as $n \rightarrow \infty$, the Hoffman bound approaches $3+k$.
	\end{corollary}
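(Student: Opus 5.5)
The plan is to substitute the spectrum of $\Gamma \square C_n$ directly into the Hoffman bound of \Cref{HBB} and then let $n\to\infty$. Throughout, $\Gamma$ is a connected $k$-regular graph on $m$ vertices, as in \Cref{method}. Then $\Gamma \square C_n$ is $(k+2)$-regular on $v = mn$ vertices.

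The first step is to identify the second-largest eigenvalue $\gamma_2$ of $\Gamma \square C_n$. The eigenvalues of a Cartesian product are the sums $\theta + 2\cos(2\pi j/n)$, where $\theta$ ranges over the spectrum of $\Gamma$ and $0 \le j < n$. I will reuse the argument inside the proof of \Cref{method}. The largest eigenvalue is $k+2$. Let $r$ be the second-largest eigenvalue of $\Gamma$; since $\Gamma$ is connected, $r<k$. The candidates for the second-largest eigenvalue of the product are therefore $k+2\cos(2\pi/n)$ and $r+2$. Because $2\cos(2\pi/n)\to 2 > r+2-k$, for all sufficiently large $n$ we get
\[
\gamma_2 = k + 2\cos\left(\frac{2\pi}{n}\right).
\]
I will fix $n$ this large from now on.

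Next, I will plug this into \Cref{HBB}. This gives
\[
\omega(\Gamma\square C_n) \le \frac{mn\left(1+k+2\cos(2\pi/n)\right)}{mn-(k+2)+k+2\cos(2\pi/n)} = \frac{mn\left(1+k+2\cos(2\pi/n)\right)}{mn-2+2\cos(2\pi/n)}.
\]
Dividing the numerator and the denominator by $mn$, the denominator becomes $1 - (2-2\cos(2\pi/n))/(mn)$, which tends to $1$. The numerator becomes $1+k+2\cos(2\pi/n)$, which tends to $3+k$. Hence the Hoffman bound tends to $3+k$, which is the claim.

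The only step needing care is the first one: justifying that $\gamma_2$ eventually equals $k+2\cos(2\pi/n)$ rather than $r+2$. This is exactly where connectivity of $\Gamma$ is used. Since it is already argued in the proof of \Cref{method}, I expect no real obstacle. Everything after that is an elementary limit in which $v = mn \to \infty$.
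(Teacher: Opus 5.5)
Your proposal is correct and follows essentially the same route as the paper: for large $n$, identify the second-largest eigenvalue of $\Gamma \square C_n$ as $k+2\cos(2\pi/n)$ (as in \Cref{method}), substitute it into the Hoffman bound, divide through by $v=mn$, and let $n\to\infty$. The paper phrases this through the complement's least eigenvalue $\gamma'_v=-1-\gamma_2$, but that yields the identical expression $\frac{mn(1+k+2\cos(2\pi/n))}{mn-2+2\cos(2\pi/n)}$, so the difference is purely notational.
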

	\begin{proof}
	From \Cref{method}, we can see that as $n \rightarrow \infty$, we have that $\gamma'_{v} \rightarrow -3-k$. Let $|V(\Gamma)| = u$, which means $|V(\Gamma \square C_n)|= v = un$ . Using the Hoffman bound we obtain the following: \begin{align*} \frac{-v \gamma'_{v}}{v-(k+2)-1-\gamma'_{v}}  = \frac{-un \gamma'_{v}}{un-k-3-\gamma'_{v}} = \frac{- \gamma'_{v}}{1-\frac{k+3+\gamma'_{v}}{un}}.\end{align*} We can see as $n \rightarrow \infty$ the Hoffman bound approaches $-\gamma'_{v}$. This means as $n \rightarrow \infty$, the Hoffman bound for $\Gamma \square C_n$ approaches $3+k$.
	\end{proof}

    We can see that the Hoffman bound performs poorly for graphs of this nature, and produces a redundant solution. The largest possible size of a clique of $\Gamma \square C_n$ is at most $k+1$, for any $\Gamma$. We can show that the generalised CAP is much more robust when bounding these types of graphs.
This leads to: 

\begin{theorem}
		\label{reg_1}
	The generalised CAP bound improves on the Hoffman bound for $\Gamma \square C_n$, when $\Gamma$ is a $k$-regular graph, for large enough $n$.
\end{theorem}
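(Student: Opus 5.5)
The plan is to apply \Cref{gen} to $\Gamma\square C_n$ with $s=k+2$, using the same mechanism as \Cref{trivial}: take $x=0$ and use an upper bound on the edge-degrees inside a clique. This will certify $\omega(\Gamma\square C_n)\le k+1$. Comparing with \Cref{CAPHoff}, which says the Hoffman bound tends to $k+3$, then gives strict improvement for all sufficiently large $n$.

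\emph{Step 1 (structure of cliques).} Take $n\ge 4$ and write vertices of $\Gamma\square C_n$ as pairs $(u,i)$ with $u\in V(\Gamma)$ and $i\in\mathbb{Z}_n$. I would first show that any clique $S$ with $|S|\ge 3$ lies inside a single layer $V(\Gamma)\times\{i\}$.
\begin{itemize}
\item Suppose $S$ contains a "vertical" edge $\{(u,i),(u,i+1)\}$.
\item A third vertex of $S$ adjacent to both endpoints would have to be either a $\Gamma$-neighbour of $u$ in both layers $i$ and $i+1$, or $(u,j)$ with $j$ adjacent to both $i$ and $i+1$ in $C_n$.
\item Neither is possible: a vertex has a single layer, and $C_n$ is triangle-free for $n\ge4$.
\end{itemize}
So every clique of size at least $3$ uses only "horizontal" edges $\{(u,i),(w,i)\}$ with $uw\in E(\Gamma)$.

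\emph{Step 2 (admissible pairs).} The graph is $(k+2)$-regular, so $\langle k\rangle_S=k+2$ for every clique $S$. For a horizontal edge $\{(u,i),(w,i)\}$, a common neighbour must lie in layer $i$: the only neighbours of $(u,i)$ outside the layer are $(u,i\pm1)$, and these are not adjacent to $(w,i)$. Hence the common neighbours correspond to common neighbours of $u,w$ in $\Gamma$. Since $w$ is one of the $k$ neighbours of $u$ and is not its own neighbour, this number is at most $k-1$. Therefore, for $s=k+2\ge 3$, I may take
\[
\mathcal F_{k+2}(\Gamma\square C_n)=\{(k+2,\lambda): \lambda\le k-1\},
\]
which contains every pair $\left(\langle k\rangle_S,\langle\lambda\rangle_S\right)$ coming from a $(k+2)$-clique.

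\emph{Step 3 (the CAP certificate).} For any $(k+2,\lambda)\in\mathcal F_{k+2}$, evaluating at $x=0$ gives
\[
\CA(0,s,k+2,\lambda)=s(s-1)(\lambda-s+2)\le (k+2)(k+1)(k-1-k)<0 .
\]
By \Cref{gen}, there is no $(k+2)$-clique, so the generalised CAP bound gives $\omega\le k+1$.

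\emph{Step 4 (comparison with Hoffman).} By \Cref{CAPHoff}, the Hoffman bound tends to $k+3$ as $n\to\infty$. So for all $n$ large enough (also taking $n\ge4$ and large enough for \Cref{method}), the Hoffman bound exceeds $k+2$. Even after rounding down it is therefore at least $k+2>k+1$, and the CAP bound is strictly better.

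I expect the main obstacle to be Step 2: the edge-degree bound must be justified carefully, in particular that no common neighbours come from adjacent layers, since this is exactly where the condition $n\ge4$ is used. The rest is direct substitution into \Cref{gen}.
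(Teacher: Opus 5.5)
Your proposal is correct and follows essentially the same route as the paper: take $s=k+2$, bound the relevant edge-degrees by $k-1$, evaluate the generalised CAP at $x=0$ to get a value at most $-(k+1)(k+2)<0$, and compare with \Cref{CAPHoff}. The only difference is cosmetic. You first confine cliques of size at least $3$ to a single layer, whereas the paper notes directly that vertical edges have no common neighbours for $n\ge 4$. Both arguments implicitly require $k\ge 1$, which your condition $s=k+2\ge 3$ makes explicit.
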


	\begin{proof}
	From Corollary \ref{CAPHoff}, there exists some value $n$ for which the Hoffman bound is greater than $k+2$. Let us consider $\Gamma \square C_n$ for this value of $n$. We can input some parameters into the generalised CAP of the form $\CA_{\Gamma \square C_n}(x,s,\lambda)$. Since $\Gamma \square C_n$ is $k+2$ regular, we have omitted the average degree variable from the polynomial. We set $s$ as $k+2$, a tighter bound than the Hoffman bound for infinitely many values of $n$. 
    Every edge of $\Gamma\square C_n$ has at most $k-1$ common neighbours. Indeed, if the edge lies inside a $\Gamma$-fibre, then its common neighbours are common neighbours of an edge of $\Gamma$, and there are at most $k-1$ of these. If the edge lies in the $C_n$-fibre, then it has no common neighbours for \(n\ge 4\). Hence $\lambda^\ast:=k-1$ is an upper bound
    on the potential edge-degree for a clique in $\Gamma \square C_n$.  This gives us a polynomial in the form: \begin{align*} \CA_{\Gamma \square C_n}(x) = -(1 + k) (2 + k) - 2 (2 + k) x + (-2 - k + v) x (1 + x). \end{align*} where $v$, as usual is the size of the vertex set of $\Gamma \square C_n$. We can see that for $x=0$, the generalised CAP produces a negative value: \begin{align*} \CA_{\Gamma \square C_n}(0) = -(1 + k) (2 + k). \end{align*} Therefore, a clique of size $k+2$ cannot exist. The generalised CAP bound improves on the Hoffman bound in this case.
	\end{proof}

\subsection{Non-regular}

Assuming that a graph $\Gamma$ contains a universal vertex, we can evaluate the Haemers bound for $\Gamma$. We get the following: 
\begin{align*} 
		\omega(\Gamma) \le \frac{-v \gamma'_1\gamma'_{v}}{(v-(v-1)-1)^2-\gamma'_1\gamma'_{v}} = v.
	\end{align*}
	The Haemers bound does not deal well with these types of graphs. The generalised CAP is more robust. 
 

    \begin{theorem}
	\label{nonreg_1}
	For a non-trivial graph $\Gamma$ with a single universal vertex, the generalised CAP bound improves on the Haemers bound.
	\end{theorem}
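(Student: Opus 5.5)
The plan is to show that the generalised CAP bound of \Cref{gen} rules out cliques of size $s=v$, so that it certifies $\omega(\Gamma)\le v-1$. By the computation just before the statement, the Haemers bound for a graph with a universal vertex only gives $\omega(\Gamma)\le v$, so this is a strict improvement. I read ``non-trivial'' as meaning that $\Gamma$ is not complete. Since $\Gamma$ has a single universal vertex, it cannot be complete once $v\ge 2$, and it has at least one non-edge $\{y,z\}$.

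First I fix $s=v$. The only $v$-subset of $V(\Gamma)$ is $V(\Gamma)$ itself, so I may take the admissible set to be the single pair
\[
\mathcal F_v(\Gamma)=\{(\langle k\rangle_V,\langle\lambda\rangle_V)\},
\]
computed exactly as in \Cref{tab:gen}. This avoids any need to bound feasible regions in the style of \Cref{visual}. Since $v-s=0$, the polynomial reduces to
\[
\CA_\Gamma(x,v,k,\lambda)=-2xv(k-v+1)+v(v-1)(\lambda-v+2),
\]
and I evaluate it at $x=0$, as in \Cref{trivial} and \Cref{reg_1}. This gives $v(v-1)(\langle\lambda\rangle_V-v+2)$. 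It is therefore enough to show $\langle\lambda\rangle_V<v-2$.

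Second, I bound the common-neighbour counts pair by pair. Every pair $\{u,w\}$ has $|\Gamma(u)\cap\Gamma(w)|\le v-2$, because $u$ and $w$ themselves are excluded. Now take the pair $\{y,w\}$ for any $w\ne y,z$; such a $w$ exists when $v\ge3$. Here $z\notin\Gamma(y)$, so $|\Gamma(y)\cap\Gamma(w)|\le v-3$. Averaging over all $\binom v2$ pairs then gives $\langle\lambda\rangle_V<v-2$, and hence $\CA_\Gamma(0,v,\langle k\rangle_V,\langle\lambda\rangle_V)<0$. By \Cref{gen}, $\Gamma$ has no $v$-clique.

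The main obstacle is a bookkeeping one: the proof must use the admissible set genuinely, that is, as the pair of averages coming from $S=V$, which is legitimate by the definition of $\mathcal F_s(\Gamma)$. The small cases $v\le 2$ also need care. For $v=2$, a single universal vertex already forces the graph to be complete, so these cases fall under ``trivial'' and are excluded. If the intended reading of ``non-trivial'' differs, I would adjust this step accordingly.
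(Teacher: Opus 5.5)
Your proof is correct and follows the paper's route. Both take $s=v$ (the Haemers value) and evaluate at $x=0$, so that the CAP collapses to $v(v-1)(\lambda-v+2)$, and then show the relevant $\lambda$ lies strictly below $v-2$. The difference is only in how that last bound is obtained. The paper bounds every edge-degree by $v-3$, since an edge with $v-2$ common neighbours would have two universal endpoints. You instead bound the single admissible pair coming from $S=V$ using a non-edge, which incidentally does not need the universal vertex to be unique.
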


	\begin{proof}
	First we claim that the maximum edge-degree of a graph of this type is $v-3$. If an edge had degree $v-2$, then both vertices of the edge would be universal vertices, which is a contradiction. Substituting in the Haemers bound $v$ as $s$, and $v-3$ as $\lambda$ means the generalised CAP has the form $$\CA_\Gamma(x,v,k,v-3) = -(v(v-1))-2(1+k-v)vx. $$ Setting $x=0$ means that we do not need to consider potential values of the $k$ variable. We get \begin{align*} \CA_{\Gamma}(0)=  -v(v-1). \end{align*} For any non-trivial graph, this value is always negative, therefore the generalised CAP improves on the Haemers bound for graphs of this type.
	\end{proof}

\bibliographystyle{abbrv}
\bibliography{refs}

\end{document}